\newtheorem{theorem}{Theorem}
\newtheorem{prop}[theorem]{Proposition}
\newtheorem{cor}[theorem]{Corollary}
\newtheorem{conjecture}[theorem]{Conjecture}
\theoremstyle{definition}
\newtheorem{definition}[theorem]{Definition}
\newtheorem{claim}[theorem]{Claim}
\newtheorem{remark}[theorem]{Remark}
\newtheorem{example}[theorem]{Example}
\newcommand{\rank}{\operatorname{rank}}
\newcommand{\krank}{\operatorname{k-rank}}
\renewcommand{\t}{{\scriptscriptstyle\mathsf{T}}}
\newcommand{\abs}[1]{\lvert #1 \rvert}
\newcommand{\bigabs}[1]{\bigl\lvert #1 \bigr\rvert}
\newcommand{\biggabs}[1]{\biggl\lvert #1 \biggr\rvert}
\renewcommand{\t}{{\scriptscriptstyle\mathsf{T}}}
\newcommand{\I}{\mathds{1}}
\newcommand{\setft}[1]{\mathrm{#1}}
\newcommand{\Lin}{\setft{L}}
\newcommand{\pro}[1]{\setft{Prod}\left(#1 \right)}
\newcommand{\complex}{\mathbb{C}}
\newcommand{\field}{\mathbb{F}}
\newcommand{\real}{\mathbb{R}}
\newenvironment{namedtheorem}[1]
	       {\begin{trivlist}\item {\bf #1.}\em}{\end{trivlist}}
\newcommand\W{\mathcal{W}}
\newcommand\V{\mathcal{V}}
\newcommand\C{\mathcal{C}}
\DeclareMathOperator{\spn}{span}
\newcommand{\eql}[2]{\begin{align}\label{#1}#2\end{align}}
\newcommand{\eq}[2]{
\ifthenelse{\equal{#1}{}}{\begin{align}#2\end{align}}{\eql{#1}{#2}}}
\newcommand{\ha}[2][]{
\ifthenelse{\equal{#1}{}}{#2}{#1, #2}}
\begin{document}
\emergencystretch 3em

\title{\bf Toward a generalization of Kruskal's theorem on tensor decomposition}

\author{
  Benjamin Lovitz
  %\footnote{benjamin.lovitz@gmail.com}
  \\[2mm]
  {\it Institute for Quantum Computing and Department of Applied Mathematics}\\
  {\it University of Waterloo, Canada}}
\maketitle
\begin{abstract}
Kruskal's theorem states that a sum of product tensors constitutes a unique tensor rank decomposition if the so-called \textit{k-ranks} of the product tensors are large. In this work, we propose a conjecture in which the k-rank condition of Kruskal's theorem is weakened to the standard notion of rank, and the conclusion is relaxed to a statement on the linear dependence of the product tensors. Our conjecture would imply a generalization of Kruskal's theorem. Several adaptations and generalizations of Kruskal's theorem have already been obtained, but these results still cannot certify uniqueness when the k-ranks are below a certain threshold. Our generalization would contain several of these results, and could certify uniqueness below this threshold. We prove our conjecture over an arbitrary field $\field$ when the underlying multipartite vector space takes any one of three forms: ${\field^{d_1}\otimes \field^{d_2}}, \;{\field^{d_1}\otimes\field^{d_2}\otimes \field^2,}$ or  $\field^{d_1}\otimes \field^2 \otimes\cdots \otimes \field^2$. As a corollary to the third case, we prove that if $n$ product tensors form a circuit, then they have rank greater than one in at most $n-2$ subsystems. This is a quadratic improvement over a recent bound obtained by Ballico, and is sharp.
\end{abstract}

%-----------------------------------------------------------------------------%
\section{Introduction}\label{intro}

% so if true it could yield a novel alternate proof of this fundamental result. Note that several alternate proofs of Kruskal's theorem are already present in the literature \cite{Jiang2004KruskalsPL,STEGEMAN2007540,RHODES20101818,landsberg2012tensors}.

Let $[m]=\{1,\dots, m\}$ when $m$ is a positive integer, and let $\V_j$ be a vector space for each $j \in [m]$. A \textit{product tensor} in $\V=\V_1\otimes\dots \otimes \V_m$ is a non-zero tensor $z \in \V$ of the form ${z=z_1\otimes \dots \otimes z_m}$, with $z_j \in \V_j$ for all $j \in [m]$. We refer to the spaces $\V_j$ that make up the space $\V$ as \textit{subsystems}.
% (also known as \textit{factors} and \textit{loadings}).
The \textit{tensor rank} (or \textit{rank}) of a tensor $v \in \V$, denoted by $\rank(v)$, is the minimum number $r$ for which $v$ is the sum of $r$ product tensors. A decomposition of $v$ into the sum of $r$ product tensors is called a \textit{tensor rank decomposition} of $v$.
%(also known as a \textit{canonical decomposition (CANDECOMP), parallel factor (PARAFAC) model, canonical polyadic (CP) decomposition,} and \textit{topographic components model}).
An expression of $v$ as a sum of product tensors (not necessarily of minimum number) is known simply as a \textit{decomposition} of $v$. A decomposition of $v$ into the sum of $n$ product tensors
\begin{align}\label{decomposition_of_v}
v=\sum_{a \in [n]} x_{a,1} \otimes\dots \otimes x_{a,m}
\end{align}
is said to be the \textit{unique tensor rank decomposition of $v$} if for any other decomposition
\begin{align}\label{other_decomp}
v=\sum_{a \in [n]} y_{a,1} \otimes\dots \otimes y_{a,m}
\end{align}
of $v$ as a sum of $n$ product tensors there exists a permutation $\sigma \in S_n$ such that $x_a=y_{\sigma(a)}$ for all $a \in [n]$. It is easy to see that this implies $\rank(v)=n$. The decomposition~\eqref{decomposition_of_v} is said to be \textit{unique in the $j$-th subsystem} if for any other decomposition~\eqref{other_decomp} there exists a permutation $\sigma \in S_n$ such that $x_{a,j} \in \spn \{y_{\sigma(a),j}\}$ for all $a \in [n]$. Kruskal's theorem gives sufficient conditions for a decomposition~\eqref{decomposition_of_v} to constitute a unique tensor rank decomposition~\cite{kruskal1977three}. We refer to results of this kind as \textit{uniqueness criteria}.

Uniqueness criteria have found scientific applications in signal processing and spectroscopy, among others
%\cite{smilde2004multi,kroonenberg2008applied,doi:10.1137/07070111X,art1,comon2010handbook,art2,cichocki2015tensor,sidiropoulos2017tensor}
\cite{
%smilde2004multi,kroonenberg2008applied,doi:10.1137/07070111X,art1,comon2010handbook,
art2, landsberg2012tensors, cichocki2015tensor,sidiropoulos2017tensor}. In these circles, subsystems are also referred to as \textit{factors} and \textit{loadings}, and the tensor rank decomposition is also referred to as the \textit{canonical decomposition (CANDECOMP), parallel factor (PARAFAC) model, canonical polyadic (CP) decomposition,} and \textit{topographic components model}. Uniqueness of a tensor decomposition is also referred to as \textit{specific identifiability}, and uniqueness criteria as \textit{identifiability criteria}.

%Kruskal's theorem~\cite{kruskal1977three} was a seminal result that determined sufficient conditions for a decomposition~\eqref{decomposition_of_v} to constitute a unique tensor rank decomposition. We refer to results of this kind as \textit{uniqueness conditions}.
%\cite{kruskal1977three,1323268,10.1137/040608830,stegeman2009uniqueness,doi:10.1137/090779632,domanov2013uniqueness,domanov2013uniqueness2,domanov2014canonical,sorensen2015new,sorensen2015coupled,domanov2017canonical}.
The \textit{Kruskal-rank} (or \textit{k-rank}) of a set of vectors $\{u_1,\dots, u_n\}$, denoted by ${\krank(u_1,\dots, u_n)}$, is the largest number $k$ for which $\dim\spn\{ u_a : a \in S\} =k$ for every subset $S \subseteq [n]$ of size $\abs{S}=k$.
%\begin{definition}
%Let $n$, $m$ and $k_1, \dots, k_m$ be positive integers, and let $\V=\V_1\otimes \dots\otimes \V_m$ be a multipartite vector space over a field $\mathbb{F}$. We say that a set of product tensors
%\begin{align}\label{product_tensors}
%\{{x_{a,1}}\otimes\dots\otimes x_{a,m}: a \in [n] \}\subseteq \V
%\end{align}
%is in $(k_1,\dots,k_m)$\textit{-general position} if for each index $j\in [m]$, the vectors $\{x_{a,j}: a \in [n]\}$ have \textit{Kruskal rank} (or \textit{k-rank}) at least $k_j$.
%\end{definition}
Kruskal's theorem states that if a collection of product tensors $\{x_{a,1}\otimes \dots \otimes x_{a,m} : a \in [n]\}$ has large enough k-ranks $k_j=\krank(x_{1,j},\dots, x_{n,j})$, then their sum constitutes a unique tensor  decomposition. This theorem was originally proven for $m=3$ subsystems over $\real$ \cite{kruskal1977three}, was later extended to more than three subsystems by Sidiropoulos and Bro \cite{sidiropoulos2000uniqueness}, and then to an arbitrary field by Rhodes \cite{RHODES20101818} (Landsberg's proof also applies to an arbitrary field \cite{landsberg2012tensors}).
\begin{theorem}[Kruskal's theorem]\label{kruskal}
Let $n \geq 2$ and $m \geq 3$ be integers, let ${\V=\V_1\otimes \cdots \otimes \V_m}$ be a multipartite vector space over a field $\field$, and let
\begin{align}
\{{x_{a,1}}\otimes\dots\otimes x_{a,m}: a \in [n] \}\subseteq \V
\end{align}
be a set of product tensors. For each $j \in [m]$, let
\begin{align}
k_j = \krank(x_{1,j},\dots, x_{n,j}).
\end{align}
If ${2n \leq\sum_{j=1}^m (k_j-1)+1},$ then $\sum_{a \in [n]} x_{a,1} \otimes \dots \otimes x_{a,m}$ constitutes a unique tensor rank decomposition.
\end{theorem}

%In \cite{Jiang2004KruskalsPL,STEGEMAN2007540,RHODES20101818,landsberg2012tensors} alternate proofs of Kruskal's theorem are given.

%In \cite{Berge:2002aa} it is shown that Kruskal's inequality $2n \leq \sum_{j=1}^m (k_j-1)+1$ is also necessary for uniqueness when $n \leq 3$ and $m=3$, but not for $n\geq 4$. However, it is also shown that Kruskal's inequality is necessary for uniqueness even for $m=3$, $n=4$ under the condition that
%\begin{align}
%\dim\spn \{x_{a,j} : a \in [n]\}=k_j\quad\text{for all}\quad j \in [3].
%\end{align}
%In \cite{STEGEMAN2006210} it is shown that Kruskal's inequality is not necessary for uniqueness, even under this restrictive condition, for $n\geq 5$.

In \cite{DERKSEN2013708} it is shown that the inequality appearing in Kruskal's theorem is sharp, in the sense that there exist cases in which ${2n = \sum_{j=1}^m (k_j-1)+2}$ and the decomposition is not unique. While Kruskal's theorem gives sufficient conditions for uniqueness, necessary conditions are obtained in \cite{krijnen1993analysis,STRASSEN1983645,liu2001cramer}. In \cite{effective} it is shown that Kruskal's theorem is \textit{effective} over $\real$ or $\complex$ in the sense that it certifies uniqueness on a dense open subset of the smallest semialgebraic set containing the set of rank $n$ tensors. Generic uniqueness has been studied, for example, in \cite{Bocci:2014aa,chiantini,lathauwergeneric}. Uniqueness of symmetric tensor decompositions (also known as the INDSCAL model), and of other types of decompositions have been studied, for example, in \cite{article,Ballico:2012aa,sorensen2015coupled,Massarenti:2018aa,Angelini:2020aa}.

Our main conjecture in this work is not itself a uniqueness criterion, but would imply a criterion that generalizes Kruskal's theorem. In our main conjecture, the k-rank condition in Kruskal's theorem is relaxed to a condition on the standard rank of $(x_{1,j},\dots, x_{n,j})$. In turn, the conclusion is also relaxed to a statement describing the linear dependence of the product tensors $x_{a,1}\otimes \dots \otimes x_{a,m}$. Before stating our main conjecture, we first introduce the generalization of Kruskal's theorem it would imply.

%A drawback to Kruskal's theorem, and to all of the known uniqueness conditions that we are aware of, is that they cannot certify uniqueness when the $k$-ranks are small. The following conjectural statement would generalize Kruskal's theorem and has the capacity to efficiently certify uniqueness even when the k-ranks are small.

\begin{conjecture}\label{k-gen}
Let $n \geq 2$ and $m \geq 3$ be integers, let $\V=\V_1 \otimes \cdots \otimes \V_m$ be a multipartite vector space over a field $\field$, and let
\begin{align}
\{{x_{a,1}}\otimes\dots\otimes x_{a,m}: a \in [n] \}\subseteq \V
\end{align}
be a set of product tensors. For each subset $S \subseteq [n]$ of size $2\leq \abs{S} \leq n$ and index $j\in [m]$, let
\begin{align}
d_j^S=\dim\spn \{ x_{a,j}: a \in S\}.
\end{align}
If $\;2 \abs{S} \leq\sum_{j=1}^m (d_j^S-1)+1$ for every such $S$, then $\sum_{a \in [n]} x_{a,1} \otimes \dots \otimes x_{a,m}$ constitutes a unique tensor rank decomposition.
\end{conjecture}

To see that Conjecture~\ref{k-gen} contains Kruskal's theorem, assume the conditions of Kruskal's theorem hold and note that for any subset $S \subseteq [n]$, the product tensors $\{x_a : a \in S\}$ satisfy $d_j^S \geq \min\{k_j, \abs{S}\}$. Using this fact, it is easy to verify that ${2n \leq \sum_{j=1}^m (k_j-1)+1}$ implies ${2\abs{S} \leq \sum_{j=1}^m (d_j^S-1)+1}$ for every subset $S \subseteq [n]$ of size $2 \leq \abs{S} \leq n$.

In Section~\ref{uniqueness_applications} we compare Conjecture~\ref{k-gen} to the uniqueness criteria of Domanov, De Lathauwer, and S\o{}rensen (DLS), which generalize Kruskal's theorem in the case of three subsystems \cite{domanov2013uniqueness,domanov2013uniqueness2,domanov2014canonical,sorensen2015new,sorensen2015coupled}. Other uniqueness criteria that we are aware of can only be applied when the tensor rank $n$ is small \cite{chiantini,unknown}, or when $k_1=d_1=n$ \cite{doi:10.1137/090779632}. The question of whether a given decomposition constitutes a unique tensor rank decomposition can be phrased as an ideal membership problem, and hence is theoretically computable, but likely computationally intractable. We restrict our attention to the uniqueness criteria of DLS for which we are aware of an efficient implementation. The cited results of DLS contain many similar but incomparable criteria, which can be difficult to keep track of. In Theorem~\ref{uniqueness} we synthesize these criteria into a single statement, and directly prove a generalization of one of them. Unfortunately, Theorem~\ref{uniqueness} requires the k-ranks to be large (see \eqref{k-rank-large} for a precise statement). Our Conjecture~\ref{k-gen} does not require the k-ranks to be large, and hence has the potential to efficiently certify uniqueness for a large class of tensors that cannot be handled by current means.

Our Conjecture~\ref{k-gen} also appears to give evidence for a generalization of Theorem~\ref{uniqueness}, which would unify this medley of uniqueness criteria into a single, elegant criterion. We follow closely the formalism of \cite{domanov2013uniqueness,domanov2013uniqueness2}. Every uniqueness criterion in Theorem~\ref{uniqueness} assumes a certain condition, which we call Condition~U, that guarantees uniqueness in the first subsystem by Kruskal's permutation lemma~\cite{kruskal1977three}. (Our Condition~U is Condition~$U_{n-d_1+1}$ of \cite{domanov2013uniqueness,domanov2013uniqueness2}, with the additional assumption that $k_1\geq 2$.) Further conditions are then assumed which certify full uniqueness. As there is no known method to check Condition~U efficiently, two more restrictive, but more efficiently checkable conditions, called Condition~H and Condition~C, are often used instead. Our Conjecture~\ref{k-gen} would imply that Condition~H alone certifies uniqueness, which leads us to ask whether Condition~U alone certifies uniqueness. Such a statement would generalize Theorem~\ref{uniqueness}.

%It is known that if a set of product tensors satisfy $n \leq \sum_{j=1}^m (k_j-1)$, then they are linearly independent, and if $n+1 \leq \sum_{j=1}^m (k_j-1)$, then they contain no product tensors in their span except trivial scalar multiples. At the end of Section~\ref{uniqueness_applications} we prove that if $n+r\leq \sum_{j=1}^m (k_j-1)+1$ for any integer $r \in \{0, 1, \dots, n-1\}$, then the sum of any subset of the product tensors of size greater than $r$ has multilinear rank $(r_1,\dots, r_m)$, where $r_j > r$ for all $j \in [m]$.

At the end of Section~\ref{uniqueness_applications}, we generalize Condition~U to the case of greater than three subsystems. We also prove a related result on the multilinear rank of linear combinations of product tensors with large k-ranks, which generalizes results of Ha and Kye \cite{1751-8121-48-4-045303}.

% which we now describe using the formalism introduced by De Lathauwer and Domanov \cite{domanov2013uniqueness,domanov2013uniqueness2}. The premise of Theorem~\ref{uniqueness} 
%
% called Condition~U (see Section~\ref{uniqueness_applications} or~\cite{domanov2013uniqueness})
%
%(Condition~U, see Section~\ref{uniqueness_applications} or~\cite{domanov2013uniqueness}) is assumed which guarantees that the decomposition is unique in one subsystem by Kruskal's permutation lemma~\cite{kruskal1977three}. Then, further conditions are imposed to certify full uniqueness. Unfortunately, there is no known method to check Condition~U efficiently. As a result, two more restrictive but more efficiently checkable conditions, called Condition~H and Condition~C, are often used instead. We show that our Conjecture~\ref{k-gen} would imply that Condition~H alone certifies uniqueness. This leads us to ask whether Condition~U alone certifies uniqueness. Such a statement would generalize all of the efficiently checkable uniqueness conditions of De Lathauwer et al.

We now state our main conjecture, which in Section~\ref{uniqueness_applications} we prove would imply Conjecture~\ref{k-gen}. We first require a definition.

\begin{definition}
Let $n\geq 2$ be an integer, and let $\V$ be a vector space over a field $\field$. We say that a set of non-zero vectors $\{v_1,\dots,v_n\}\subseteq \V\setminus \{0\}$ \textit{splits as a direct sum} (or simply, \textit{splits}) if there exist non-empty sets $S, T \subseteq \{v_1,\dots,v_n\}$ that partition $\{v_1,\dots,v_n\}$ (i.e. $S \cup T = \{v_1,\dots,v_n\}$ and $S \cap T=\{\} $) such that
\begin{align}\label{sdef}
\spn\{v_1,\dots,v_n\}=\spn(S) \oplus \spn(T).
\end{align}
\end{definition}

%Here we conjecture an analogous statement to Kruskal's theorem.
%%prove it in two special cases, and show that it would imply Kruskal's theorem as a corollary. We also use Kruskal's theorem to prove a family of statements which contain recent results in \cite{1751-8121-48-4-045303}, and show that our conjecture would imply an analogous family of statements for vectors 
%To introduce our conjecture and it's connection to Kruskal's theorem, we first state (an equivalent reformulation of) Kruskal's theorem.

Now we state our main conjecture.
\begin{conjecture}\label{conjecture}
Let $n \geq 2$ and $m \geq 1$ be integers, let $\V=\V_1\otimes \cdots\otimes \V_m$ be a multipartite vector space over a field $\field$, and let
\begin{align}
R=\{x_{a,1} \otimes \dots \otimes x_{a,m}: a \in[n] \} \subseteq \V
\end{align}
be a set of product tensors. For each $j\in [m]$, let
\begin{align}
d_j=\dim\spn \{ x_{a,j}: a \in [n]\}.
\end{align}
If $n\leq \sum_{j=1}^m (d_j-1)+1$, then $R$ splits.
\end{conjecture}

We prove Conjecture~\ref{conjecture} in three special cases.
\begin{theorem}\label{s_cases}
Conjecture~\ref{conjecture} holds in the following cases:
\begin{enumerate}
\item
%\emph{\textbf{Bipartite case (see Proposition~\ref{bipartite_prop}):}} 
$m=2$ and $d_1,d_2\geq 1$ (hereafter referred to as the \textit{bipartite case}).
\item %\emph{\textbf{Restricted tripartite case (see Theorem~\ref{tripartite_thm}):}} 
$m=3$, $d_1,d_2\geq 1$, and $1\leq d_3\leq 2$ (hereafter referred to as the \textit{restricted tripartite case}).
\item %\emph{\textbf{Restricted multipartite case (see Theorem~\ref{sep_combo}):}} 
$m\geq 4$, $d_1 \geq 1$, and ${1\leq d_2,\dots,d_m \leq 2}$ (hereafter referred to as the \textit{restricted multipartite case}).
\end{enumerate}
\end{theorem}
In Section~\ref{k-gen} we prove these three statements in Proposition~\ref{bipartite_prop}, Theorem~\ref{tripartite_thm}, and Theorem~\ref{sep_combo} respectively. In Proposition~\ref{bipartite_prop} and Theorem~\ref{tripartite_thm}, we actually prove more general statements than the bipartite and restricted tripartite cases of Conjecture~\ref{conjecture}. Theorem~\ref{sep_combo} implies that Conjecture~\ref{k-gen} holds for $m\geq 4$ when for every subset $S\subseteq[n]$ of size $2 \leq \abs{S} \leq n$ there exists an index $j\in [m]$ such that $d_i^S \leq 2$ for all $i \in [m]\setminus j$. Unfortunately, we have been unable to find an example for which this statement certifies uniqueness, but some reshaping of Kruskal's theorem does not (see \cite{effective}). Theorem~\ref{tripartite_thm} similarly gives rise to a uniqueness criterion, which turns out to be contained in Kruskal's theorem.

Recall that a set of non-zero vectors forms a \textit{circuit} if it is linearly dependent and any non-empty strict subset is linearly independent. Since a circuit does not split, an immediate consequence of Theorem~\ref{sep_combo} is that if a set of $n$ product tensors forms a circuit, then $d_j>1$ for at most $n-2$ indices $j \in [m]$ (Corollary~\ref{linincor}). This quadratically improves the bound $\binom{n}{2}+n$ obtained by Ballico \cite[Theorem 1.1]{ballico2020linearly}. In Section~\ref{conjecture_kruskal} we use Derksen's result \cite{DERKSEN2013708} to  prove that our bound is sharp in the sense that there exist circuits for which $d_j>1$ for $n-2$ indices $j \in [m]$. We furthermore prove that the inequality ${n \leq \sum_{j=1}^m (d_j-1)+1}$ appearing in Conjecture~\ref{conjecture} would be sharp in a similar sense. Ballico used his result to study linearly dependent sets of product tensors (see also \cite{Ballico_2020}). In Section~\ref{consequences} we the use the (well-known) $n=3$ case of our bound to provide an alternate proof of a recent result in quantum information theory~\cite{PhysRevA.95.032308}. In a follow up work, the author studies \textit{decomposable correlation matrices}, correlation matrices that can be written as the Schur product of correlation matrices of reduced rank, and uses Corollary~\ref{linincor} to bound the number of non-trivial correlation matrices that can appear in a decomposition \cite{doi:10.1080/03081087.2019.1661347}.

In Section~\ref{consequences} we also introduce two statements that would follow from Conjecture~\ref{conjecture}, special cases of which follow from Theorem~\ref{s_cases}. First, if the sum of a set of $n$ product tensors has tensor rank at most $r$ for some $r \in [n-1]$, and $n+r\leq \sum_{j=1}^m (d_j-1)+1,$ then the sum of some subset of the product tensors of size at least $r$ has tensor rank less than $r$. The restricted multipartite case of this statement with $r=1$ is essentially Corollary~\ref{linincor}. This statement has connections with Condition~U in the study of uniqueness criteria. Second, if a tensor has multilinear rank $(r_1,\dots, r_m)$ (see Section~\ref{mp}) and tensor rank $n$ with $2n \leq \sum_{j=1}^m (r_j-1)+1$, then for any two tensor rank decompositions of it, the sum of a strict subset of the two sets of product tensors involved must agree (Corollary~\ref{ana_cor}). This conclusion can be viewed as a weakening of the statement that the tensor rank decomposition is unique.

We close this introduction by describing an equivalent formulation of Conjecture~\ref{conjecture} that we will use to prove Theorem~\ref{s_cases}, and which may be of independent theoretical interest. Consider the following definition.
\begin{definition}
Let $n\geq 2$ be an integer, and let $\V$ be a vector space over a field $\field$. We say a set of non-zero vectors $\{v_1,\dots,v_n\}\subseteq \V\setminus\{0\}$ is \textit{minimal} over $\field$ if there exist (non-zero) scalars $\alpha_1,\dots,\alpha_n \in \field$ such that
\begin{align}\label{eq:zero}
\sum_{a\in [n]} \alpha_a v_a =0,
\end{align}
and for every subset $S \subseteq [n]$ of size $1\leq \abs{S} \leq n-1$, it holds that
\begin{align}\label{eq:nonzero}
\sum_{a\in S} \alpha_a v_a \neq 0.
\end{align}
We say that \eqref{eq:zero} constitutes a \textit{minimal linear dependence} of $\{v_1,\dots,v_n\}$. 
\end{definition}
Note that our definition of minimal differs from that of Ballico \cite{Ballico_2020}. In Proposition~\ref{field} we prove that splitting is invariant under field extensions, and in Proposition~\ref{prop:min_split} we prove that over an infinite field, a set of vectors splits if and only if it is not minimal. As a result, it suffices to prove (special cases of) Conjecture~\ref{conjecture} over an infinite field, with ``splits'' replaced by ``is not minimal'' if desired. When ambiguity arises, we refer to these two versions of Conjecture~\ref{conjecture} as the splitting and non-minimal versions, respectively. In Section~\ref{special_cases}, we appeal to both of these versions to prove Theorem~\ref{s_cases}.

%-----------------------------------------------------------------------
\section{Acknowledgments}
%-----------------------------------------------------------------------

I thank Edoardo Ballico, Matthias Christandl, Harm Derksen, Dragomir {\DJ}okovi{\'{c}}, Ignat Domanov, Nathaniel Johnston, Joseph M. Landsberg, Lieven De Lathauwer, Chi-Kwong Li, Rotem Liss, Daniel Puzzuoli, William Slofstra, Hans De Sterck, and John Watrous for helpful discussions and comments on drafts of this manuscript. A previous iteration of this work \cite{tensor} contained only the non-minimal version of Conjecture~\ref{conjecture}. I thank Harm Derksen for suggesting the splitting version that appears here. I thank Dragomir {\DJ}okovi{\'{c}} for first suggesting a connection to Kruskal's theorem, for simplifying an argument in the proof of Theorem~\ref{sep_combo}, and for suggesting that these results might hold for an arbitrary field.

%-----------------------------------------------------------------------
\section{Mathematical preliminaries}\label{mp}
%-----------------------------------------------------------------------

Here we review some mathematical background for this work that was not covered in the introduction. For vector spaces $\V_1, \dots, \V_m$ over a field $\field$, we use $\pro{\V_1 : \dots : \V_m}$ to denote the set of (non-zero) product tensors in $\V_1\otimes  \dots \otimes \V_m$. This set forms an algebraic variety given by the affine cone over the {\it Segre variety} $\setft{Seg}(\mathbb{P} \V_1 \times \dots \times \mathbb{P} \V_m)$, with $0$ removed. We use symbols like $a, b$ to index tensors, and symbols like $i, j$ to index subsystems. For vector spaces $\V$ and $\W$, let $\Lin(\V,\W)$ denote the space of linear maps from $\V$ to $\W$. We use the shorthand $\Lin(\V)=\Lin(\V,\V)$. For a vector space $\V$ of dimension $d$, let $\{e_1,\dots, e_d\}$ be a standard basis for $\V$.

For a product tensor $z \in \pro{\V_1 : \dots : \V_m}$, the vectors ${z_{j} \in \V_j}$ for which ${z=z_1 \otimes \dots \otimes z_m}$ are uniquely defined up to scalar multiples $\alpha_{1} z_1, \dots, \alpha_m z_m$ such that $\alpha_1\cdots \alpha_m=1$. For positive integers $n$ and $m$, we frequently define sets of product tensors
\begin{align}
\{ x_a : a \in [n]\} \subset \pro{\V_1 : \dots : \V_m}
\end{align}
without explicitly defining corresponding vectors $\{x_{a,j}\}$ such that
\begin{align}
x_a=x_{a,1}\otimes \dots \otimes x_{a,m}\quad \text{for each} \quad a\in [n].
\end{align}
In this case, we implicitly fix some such vectors, and refer to them without further introduction.
% We also implicitly define the linear maps
%\begin{align}
%X_j=(x_{1,j}, \dots, x_{n,j}) \in \Lin(\field^n,\V_j)
%\end{align}
%for each $j \in [m]$.

 We use the notation
\begin{align}
x_{a, \hat{j}}&= x_{a,1} \otimes \dots \otimes x_{a,j-1}\otimes x_{a, j+1} \otimes \dots \otimes x_{a,m},\\
\V_{\hat{j}}&=\V_1 \otimes \dots \otimes \V_{j-1}\otimes \V_{j+1}\otimes\dots \otimes V_{m},
\end{align}
so $x_{a,\hat{j}}\in \V_{\hat{j}}$. Note that $\V_1\otimes \dots \otimes \V_m$ is naturally isomorphic to $\Lin(\V_j^*,\V_{\hat{j}})$ for any $j \in [m]$, where $\V_j^*$ is any dual vector space to $\V_j$. The rank of a tensor in $\V_1 \otimes \V_2$ is equal to the rank of the corresponding linear operator in $\Lin(\V_1^*, \V_2)$. We denote the standard matrix rank of a tensor $v\in \V$, viewed as an element of $\Lin(\V_j^*,\V_{\hat{j}})$, by $\rank_j(v)$. The \textit{multilinear rank} of $v$ is the $m$-tuple $(\rank_1(v),\dots,\rank_m(v))$.

We write $S \cup T$ to denote the union of two sets $S$ and $T$. If $S$ and $T$ happen to be disjoint, we often write $S \sqcup T$ instead to remind the reader of this fact. For a positive integer $s$, we say that a collection of subsets $S_1,\dots, S_s \subseteq T$ \textit{partitions} $T$ if $S_q \neq \{\}$ for all $q \in [s]$, $S_q \cap S_r =\{\}$ for all $q\neq r \in [s]$, and $S_1 \sqcup \dots \sqcup S_s=T$.

%We use the notation $\ip{\cdot}{\cdot}: \W\times \W \rightarrow \field$ to denote an arbitrary non-degenerate bilinear form (the underlying vector space $\W$ of $\ip{\cdot}{\cdot}$ will usually be clear from the context, and will not be specified explicitly). Let $\W, \X$ be vector spaces. We define a \textit{contraction map} associated to $\ip{\cdot}{\cdot}$ by 
%\begin{align}
%(\bra{\cdot} \otimes \I) &: \W \times ( \W \otimes \X ) \rightarrow \X\\
%(\bra{\cdot} \otimes \I) (w, u \otimes x)&:= (\bra{w} \otimes \I)(u \otimes x)= \ip{w}{u} x, \label{contract}
%\end{align}
%where the second expression in~\eqref{contract} is a notational convention, and the third is the definition. Equation~\ref{contract} completely determines $\bra{\cdot} \otimes \I$ because, as with any linear map, the action of $\bra{w} \otimes \I$ on $\W \otimes \X$ is uniquely determined by its action on product tensors.

We close this section by proving a pair of propositions that will allow us to work over an infinite field without loss of generality. The first proposition states that splitting is invariant under field extensions, and the second states that over an infinite field, splitting and non-minimality are equivalent. As a result, to prove (special cases of) Conjecture~\ref{conjecture} over an arbitrary field, it suffices to prove either the splitting or non-minimal version over an infinite field.

For a field $\field$, let $\overline{\field}$ denote any infinite field extension of $\field$ (e.g. the algebraic closure), and for a vector space $\V$ over $\field$, let $\overline{\V}=\overline{\field} \otimes_{\field} \V$ denote the corresponding extension of scalars of $\V$. If $\{v_1,\dots, v_n\}$ is linearly independent in $\V$, then $\{1 \otimes v_1,\dots, 1 \otimes v_n\}$ is linearly independent in $\overline{\V}$. 
%For $\V \cong \field^d$, we have $\overline{\V} \cong \overline{\field}^d$.
%We sometimes abuse notation and write elements $\alpha \otimes v \in \overline{\V}$ as $\alpha v$. For a dual vector $f \in \overline{\V}^*$, we sometimes abuse notation and write $f(1 \otimes v)$ as $f(v)$.

\begin{prop}\label{field}
Let $n\geq 2$ be an integer, let $\V$ be a vector space over a field $\field$, and let ${\mathbb{K}\supseteq \field}$ be a field extension. A set of non-zero vectors $\{v_1,\dots, v_n\}\subseteq \V\setminus\{0\}$ splits if and only if ${\{1\otimes v_1,\dots, 1 \otimes v_n\}\subseteq \mathbb{K} \otimes_{\field} \V \setminus \{0\}}$ splits.
\end{prop}
\begin{proof}
The statement follows easily from the fact that linear dependence does not depend on field extensions.
\end{proof}

\begin{prop}\label{prop:min_split}
Let $\V$ be a vector space over a field $\field$, and let $v_1,\dots,v_n \in \V\setminus\{0\}$ be non-zero vectors. If $\{v_1,\dots, v_n\}$ splits, then it is not minimal. If $\field$ is infinite, then $\{v_1,\dots, v_n\}$ splits if and only if it is not minimal.
\end{prop}
The converse does not always hold over a finite field. The set $\{1,1,1\}$ is not minimal over $\mathbb{Z}_2$ (as a vector space over itself), and also does not split. Another difference between these notions is that splitting is invariant under field extensions, whereas minimality is not (as evidenced by this example).
\begin{proof}[Proof of Proposition~\ref{prop:min_split}]
Suppose $\sum_{a \in [n]} \alpha_a v_a=0$ constitutes a minimal linear dependence of $\{v_1,\dots, v_n\}$. Then $\{v_1,\dots, v_n\}$ clearly does not split, since for any partition $S \sqcup T=[n]$,
\begin{align}
\sum_{a \in S} \alpha_a v_a \in {\spn\{v_a : a \in S\} \cap \spn\{v_a : a \in T\}}
\end{align}
is non-zero. It remains to show that if $\field$ is infinite and $\{v_1,\dots, v_n\}$ is not minimal, then it splits. We use basic algebraic geometry for this argument, for which we refer the reader to~\cite{harris2013algebraic}. Let
\begin{align}
X&=\{(u_1,\dots, u_n)\in \V \times \dots \times \V : u_a \in \spn\{v_a\} \text{ for all } a \in [n]\},\\
Y&=\{(u_1,\dots, u_n) \in \V \times \dots \times \V : \sum_{a\in [n]} u_a =0\},\\
U_S&=\{(u_1,\dots,u_n) \in X \cap Y : \sum_{a \in S} u_a \neq 0\} \quad \text{for each} \quad S \subseteq [n],\\
U&=\bigcap_{\substack{S\subseteq [n]\\ 1 \leq \abs{S}\leq n-1}}U_S.
\end{align}
Since $X \cap Y$ is a linear subspace and $\field$ is infinite, $X \cap Y$ is an irreducible algebraic variety. Note that each $U_S$ is Zariski open in $X \cap Y$. Since $\{v_1,\dots, v_n\}$ is not minimal, $U=\{\}$, which implies that $U_S = \{\}$ for some $S \in [n]$ with $1 \leq \abs{S} \leq n-1$. This implies $U_{[n]\setminus S}=\{\}$, so for every $u \in \spn\{ v_a : a \in S\}$ and $w \in \spn\{v_a : a \in [n]\setminus S\}$ such that $u+w=0$, it holds that $u=w=0$. It follows that $\{v_1,\dots, v_n\}$ splits with respect to the partition $S\sqcup ([n]\setminus S)=[n]$.
\end{proof}

%-----------------------------------------------------------------------
\section{Applications of the main conjecture to uniqueness of tensor rank decompositions}\label{uniqueness_applications}
%-----------------------------------------------------------------------

In this section we prove that Conjecture~\ref{conjecture} would imply Conjecture~\ref{k-gen}, a uniqueness criterion that would generalize Kruskal's theorem. We then compare Conjecture~\ref{k-gen} to uniqueness criteria obtained by Domanov, De Lathauwer, and S\o{}rensen (DLS) in the case of three subsystems \cite{domanov2013uniqueness,domanov2013uniqueness2,domanov2014canonical,sorensen2015new,sorensen2015coupled}. These are the most general known uniqueness criteria that we are aware of, apart from the case when $n$ is small \cite{chiantini,unknown} or $k_1=d_1=n$ \cite{doi:10.1137/090779632}.

\begin{proof}[Proof that Conjecture~\ref{conjecture} would imply Conjecture~\ref{k-gen}]
Let $x_a=x_{a,1}\otimes \dots \otimes x_{a,m}$ for each $a \in [n]$. Suppose that $\;2 \abs{S} \leq\sum_{j=1}^m (d_j^S-1)+1$ whenever $2 \leq \abs{S} \leq n$, and $\sum_{a \in [n]} x_a = \sum_{a \in [n]} y_a$ for some product tensors $y_a=y_{a,1}\otimes \dots \otimes y_{a,m}$. It suffices to show that there exists a permutation $\sigma \in S_n$ for which $x_a=y_{\sigma(a)}$ for all $a \in [n]$. Let $R=\{x_1,\dots, x_n\}$ and ${Q=\{-y_1,\dots, -y_n\}}$. As with any set of non-zero vectors in a vector space, there exists a positive integer $t$ and a (unique) collection of disjoint, non-empty sets
\begin{align}
T_1,\dots,T_t \subseteq R \cup Q
\end{align}
such that $T_i$ does not split for all $i \in [t]$,
\begin{align}
 T_1 \cup \dots \cup T_t = R\cup Q,
\end{align}
and
\begin{align}
\spn(R\cup Q)=\bigoplus_{i\in [t]} \spn(T_i).
\end{align}
Since $\sum_{a \in [n]} x_a-\sum_{a \in [n]} y_a=0$, this implies that the elements of $T_i$ sum to zero for all $i \in [t]$.

If $\abs{T_i}=2$ for all $i \in [t]$, then each set $T_i$ contains one element of $R$ and one element of $Q$, since no two elements of $\{x_1,\dots, x_n\}$ sum to zero. This will complete the proof, as it shows that $x_a=y_{\sigma(a)}$ for all $a \in [n]$, where $\sigma\in S_n$ is chosen so that $x_a$ and $-y_{\sigma(a)}$ lie in the same two-element set. Suppose toward contradiction that not every set $T_i$ has size two. Then there exists an index $i \in [t]$ such that $\abs{ T_i \cap R } \geq 2$ and
\begin{align}\label{greater_intersection}
\abs{ T_i \cap R } \geq \abs{ T_i \cap Q}.
\end{align}
We fix such an index $i$ for the remainder of the proof. Note that
\begin{align}
\abs{T_i} \leq 2 \abs{T_i \cap R} \leq \sum_{j=1}^m (d_j^{T_i\cap R}-1)+1 \leq \sum_{j=1}^m (d_j^{T_i}-1)+1,
\end{align}
where the first inequality follows from~\eqref{greater_intersection}, the second is by assumption, and the third is trivial. By Conjecture~\ref{conjecture}, $T_i$ splits, a contradiction.
\end{proof}

%In the remainder of this section, we observe that whether or not a given decomposition constitutes a unique tensor rank decomposition is theoretically computable, but likely computationally intractable. We then combine the efficiently checkable uniqueness conditions of DLS into a single statement in Theorem~\ref{uniqueness}, and generalize one of them. We observe generalizations of some of these results that would follow from our Conjecture~\ref{k-gen}, and give evidence for a generalization of Theorem~\ref{uniqueness}.

Now we compare Conjecture~\ref{k-gen} to the uniqueness criteria of DLS. For a set of product tensors,
\begin{align}
\{x_a : a \in [n]\} \subseteq \pro{\V_1:\V_2 : \V_3},
\end{align}
let $k_j=\krank(x_{a,j} : a \in [n])$ for each $j \in [3]$. For each subset $S \subseteq [n]$ of size $2 \leq \abs{S} \leq n$, let
\begin{align}
d_j^S=\dim\spn\{x_{a,j} : a \in [n]\}\quad\text{for all}\quad j\in [3].
\end{align}
We use the shorthand $d_j=d_j^{[n]}$. As we will see, all of the uniqueness criteria of DLS with a known efficient implementation require the following condition to hold:
\begin{align}\label{k-rank-large}
&\min\{k_2,k_3\} \geq n-d_1+2,\nonumber\\
\text{ or } &\min\{k_1,k_3\} \geq n-d_2 + 2,\nonumber\\
\text{ or } & \min\{k_1,k_2\} \geq n-d_3 +2.
%\begin{cases}
%\min\{k_2,k_3\} \geq n-d_1+2\\
%\min\{k_1,k_3\} \geq n-d_2 + 2\\
%\min\{k_1,k_2\} \geq n-d_3 +2
%\end{cases}
\end{align}
This is a major drawback, as it means that these results cannot certify uniqueness when the k-ranks are small. For example, if $k_2= k_3=2$, then these results can only certify uniqueness if $d_1=n$. The following example shows that Conjecture~\ref{k-gen} does not require~\eqref{k-rank-large} to hold.

\begin{example}[See example 5.2 in \cite{domanov2013uniqueness2}]\label{uniqueness_example}
\begin{align}\{(&\alpha_1 e_1)\otimes e_1 \otimes e_1,(\alpha_2 e_2)\otimes e_2 \otimes e_2, (\alpha_3 e_3) \otimes e_3\otimes e_3,\\
&(\alpha_4 e_4)\otimes e_4\otimes e_4,(\alpha_5(e_2+e_3))\otimes (e_2+e_4)\otimes (e_1+e_4)\} \quad \text{for} \quad \alpha_1,\dots, \alpha_5 \in \field^{\times}.
\end{align}

In this example, $k_1=k_2=k_3=2$, $d_1=d_2=d_3=4$, and $n-d_j+2=3$ for all $j \in [3]$, so~\eqref{k-rank-large} does not hold.
%In fact, an even less restrictive condition, Condition~W below, does not hold, so even the difficult-to-check uniqueness conditions proposed in~\cite{domanov2013uniqueness2} involving Condition~W do not apply. 
In~\cite{domanov2013uniqueness2}, it is proven that if $\alpha_2=\dots=\alpha_5=1$, then the sum of these product tensors constitutes a unique tensor rank decomposition.
%(This is done in order to show that Condition~W is not necessary for uniqueness.)
The proof given in~\cite{domanov2013uniqueness2} is quite complicated and specific to this case. This is to be expected, as uniqueness does not follow directly from any known efficiently-checkable uniqueness criteria. Uniqueness for arbitrary $\alpha_1,\dots, \alpha_5 \in \field^{\times}$ would follow easily from Conjecture~\ref{k-gen}.
\end{example}

To combine and analyze the uniqueness criteria of DLS, we recall Conditions~U,~H, and~C from \cite{domanov2013uniqueness,domanov2013uniqueness2}, which will be combined with other conditions to certify uniqueness. For notational convenience, we have changed these definitions slightly from \cite{domanov2013uniqueness,domanov2013uniqueness2}. For example, our Condition~U is their Condition~$U_{n-d_1+2}$, with the added condition that $k_1 \geq 2$. For a vector $\alpha \in \field^n$, we let $\omega(\alpha)$ denote the number of non-zero entries in $\alpha$.

%
%Note that the question of whether $\sum_{a \in [n]} x_a$ constitutes a unique tensor rank decomposition is theoretically computable, as it can be formulated as the following ideal membership problem: Is the (Zariski closed) set
%\begin{align}
%\{(y_1,\dots, y_n)\in \pro{\V_1:\V_2 : \V_3}^{\times n} : \sum_a y_a=\sum_a x_a\} \subseteq \V^{\times n}
%\end{align}
%contained in the Zariski closed set
%\begin{align}
%\bigcup_{\sigma \in S_n}\{(y_1,\dots, y_n)\in \pro{\V_1:\V_2 : \V_3}^{\times n} : y_{a}=x_{\sigma(a)} \quad \text{for all} \quad a \in [n]\} \subseteq \V^{\times n}?
%\end{align}
%Such questions can theoretically be answered deterministically using Gröbner bases, however these algorithms often require exponential space in the number of variables (even to certify membership of a single polynomial in an ideal). The number of variables in this problem is $n(d_1+d_2+d_3)$, so one would strongly expect this problem to quickly become intractable for large $n$.

%\begin{condition}[See Equation~1.12 in \cite{domanov2013uniqueness2}]\label{1.12}
%There exists $\sigma \in S_3$ such that
%\begin{align}\label{1.12eq}
%k_{\sigma(1)}+\max\{ \min\{k_{\sigma(2)},k_{\sigma(3)}-1\},\min\{k_{\sigma(2)}-1,k_{\sigma(3)}\}\}\geq n+1.
%\end{align}
%\end{condition}

%\begin{namedtheorem}{Condition W}
%It holds that $k_{1}\geq 2$, and for all $f \in \V_{1}^*$,
%\begin{align}
%\setft{rank}\Big[\sum_{a \in [n]} f(x_{a,1}) x_{a,2} \otimes x_{a,3}\Big]\geq n-d_1+2 \quad \text{whenever}\quad \omega(f(x_{1,1}),\dots,f(x_{n,1})) \geq n-d_{1}+2.
%\end{align}
%\end{namedtheorem}

\begin{namedtheorem}{Condition U}
It holds that $k_{1} \geq 2$, and for all $\alpha \in \field^n$,
\begin{align}\label{Umeq}
\rank\Big[\sum_{a \in [n]} \alpha_a x_{a, 2}\otimes x_{a,3} \Big]\geq n-d_1+2 \quad\text{whenever}\quad \omega(\alpha) \geq n-d_{1}+2.
\end{align}
\end{namedtheorem}
A less-restrictive condition than Condition~U, which we would call Condition~W, also appears in \cite{domanov2013uniqueness,domanov2013uniqueness2}, and is the same as Condition~U except that it only requires~\eqref{Umeq} to hold when ${\alpha = (f(x_{1,1}),\dots, f(x_{n,1}))}$ for some linear functional $f \in \V_1^*$. We are not aware of an efficient method to check either Condition~U or Condition~W. Nevertheless, we have included Condition~U because it will help us form a better theoretical picture of the uniqueness criteria of DSL.

Now we state Conditions~H and~C, which are more restrictive than Condition~U.

\begin{namedtheorem}{Condition H}
It holds that $k_{1} \geq 2$, and
\begin{align}\label{k-gen2eq}
d_{2}^S\!+d_{3}^S\!-\abs{S} \geq \min\{\abs{S}, n-d_{1}\!+2\} \! \quad \! \text{for all} \! \! \quad \! S \subseteq [n] \! \quad \!\! \text{of size} \quad \!\!\! 2 \leq \abs{S}\leq n.
\end{align}
\end{namedtheorem}
%Note that Condition~\ref{Wm} is less restrictive than Condition~\ref{Um}. Proposition 1.22 in~\cite{domanov2013uniqueness2} states that Conditions~\ref{1.12} and~\ref{Wm} imply uniqueness. Corollary 1.23 in~\cite{domanov2013uniqueness2} states that Conditions~\ref{1.12} and~\ref{Um} imply uniqueness. Proposition~1.26 in~\cite{domanov2013uniqueness2} states that if Condition~\ref{Um} holds for two distinct permutations $\sigma, \mu \in S_3$, then the decomposition is unique. These results are some of the most general known sufficient conditions for uniqueness, but unfortunately Conditions~\ref{Wm} and~\ref{Um} can be difficult to check. As a result, there are two more restrictive but easily checkable conditions that are often used instead. The first is easy to describe.
Condition~C takes a bit more work to describe. For positive integers $q,r,$ and $t$, and matrices
\begin{align}
Y=(y_1,\dots, y_t) \in \Lin(\field^t, \field^q)\\
Z=(z_1,\dots, z_t)\in \Lin(\field^t, \field^r),
\end{align}
let
\begin{align}
Y \odot Z=(y_1\otimes z_1, \dots, y_t \otimes z_t) \in \Lin(\field^t,\field^{qr})
\end{align}
denote the \textit{Khatri-Rao product} of $Y$ and $Z$. Suppose $\V_j=\field^{d_j}$ for each $j \in [3]$, and consider the matrices
\begin{align}
X_j=(x_{1,j},\dots, x_{n,j})\in \Lin(\field^n,\field^{d_j}) \quad \text{for} \quad j\in[3].
\end{align}
For a positive integer $s\leq d_j$, let $\C_s(X_j)$ be the $\binom{d_j}{s} \times \binom{n}{s}$ matrix of $s \times s$ minors of $X_j$, with rows and columns arranged according to the lexicographic order on the size $s$ subsets of $[d_j]$ and $[n]$, respectively. Define the matrix
\begin{align}
C_s=\C_s(X_{2})\odot \C_s(X_{3}) \in \Lin(\field^{\binom{n}{s}}, \field^{q}),
\end{align}
where $q=\big(\substack{d_{2}\\s}\big)\big(\substack{d_{3}\\s}\big)$. Now we can state the next condition.
%It turns out that Condition~W is equivalent to the existence of a permutation $\sigma \in S_3$ such that 
%\begin{align}
%\C_{s}^\sigma v \neq 0
%\end{align}
%for $s=n-d_{\sigma(1)}+2$ and a certain collection of non-zero vectors $v \in \field^{\binom{n}{l}}$. Condition~U is equivalent to the same statement, but for a larger collection of vectors. Therefore, Conditions~\ref{Wm} and~\ref{Um} are both implied by the following easily checkable condition.
\begin{namedtheorem}{Condition C}
It holds that $k_{1} \geq 2$, $\min\{d_2,d_3\} \geq n-d_1+2$, and
\begin{align}
\rank(C_{n-d_{1}+2})=\left(\substack{ n \\ n-d_{1}+2} \right).
\end{align}
\end{namedtheorem}
Now we state the condition of our Conjecture~\ref{k-gen} in the case of three subsystems. Unlike Conditions~U,~H, and~C, the following condition does not appear in~\cite{domanov2013uniqueness,domanov2013uniqueness2}, nor anywhere else that we are aware of.
\begin{namedtheorem}{Condition S}
It holds that
\begin{align}\label{concon}
2 \abs{S} \leq d_1^S+d_2^S+d_3^S-2 \quad \text{for all} \quad S \subseteq [n] \quad \text{of size} \quad 2 \leq \abs{S}\leq n.
\end{align}
\end{namedtheorem}

The following implications hold:
%\begin{equation}
%\begin{tikzcd}[row sep=tiny]
%\text{Condition~H} \arrow[Rightarrow,dr] & & &\\
% & \hspace{-5pt} & \text{Condition~U} \arrow[Rightarrow,r] & \begin{cases} \text{Condition~W}\\ \min\{k_2,k_3\} \geq n-d_1+2\\ \dim\spn \{ x_{a,\hat{1}} : a \in [n]\}=n\end{cases} \\
%\text{Condition~C} \arrow[Rightarrow,ur] & & &
%\end{tikzcd}
%\end{equation}
%
%\begin{equation}
%\begin{tikzcd}[row sep=tiny]
%\text{Condition~H} \arrow[Rightarrow,dr]  & &\\
% &  \hspace{1pt} \text{Condition~U} \arrow[Rightarrow,r] & \begin{cases} \text{Condition~W}\\ \min\{k_2,k_3\} \geq n-d_1+2\\ \dim\spn \{ x_{a,\hat{1}} : a \in [n]\}=n\end{cases} \\
%\text{Condition~C} \arrow[Rightarrow,ur] & &
%\end{tikzcd}
%\end{equation}

\begin{equation}\label{implications}
\begin{tikzpicture}[commutative diagrams/every diagram]
\node(dummy){};
\node[above=of dummy](H){\text{Condition H}};
\node[below=of dummy](C){\text{Condition C}};
\node[right=30pt of dummy](U){\text{Condition U}};
\node[right=30pt of H](S){\text{Condition S}};
%\node[right=30pt of U](W){$\begin{cases} \text{Condition~W}\\ \min\{k_2,k_3\} \geq n-d_1+2\\ \dim\spn \{ x_{a,\hat{1}} : a \in [n]\}=n.\end{cases}$};
\path[commutative diagrams/.cd, every arrow, every label]
(H) edge[commutative diagrams/Rightarrow]  (U)
(C) edge[commutative diagrams/Rightarrow] (U)
%(U) edge[commutative diagrams/Rightarrow] (W)
(H) edge[commutative diagrams/Rightarrow] (S);
\end{tikzpicture}
\end{equation}
In the case of three subsystems, our Conjecture~\ref{k-gen} states that Condition~S implies uniqueness. Since Condition~H implies Condition~S, then a corollary to Conjecture~\ref{k-gen} would be that Condition~H implies uniqueness.

All of the implications in~\eqref{implications} except (Condition H $\Rightarrow$ Condition S) were proven in~\cite{domanov2013uniqueness}. To see that Condition~H $\Rightarrow$ Condition S, note that for any subset $S\subseteq [n]$ of size $2 \leq \abs{S} \leq n$, the condition $k_1 \geq 2$ implies
\begin{align}
d_{1}^S \geq \max\{2,d_{1}-(n-\abs{S})\},
\end{align}
so by Condition~H,
\begin{align}
d_{1}^S+d_{2}^S+d_{3}^S &\geq \max\{2,d_{1}-(n-\abs{S})\} + \abs{S}+\min\{\abs{S},n-d_{1}+2\}\\
&\geq 2 \abs{S}+2,
\end{align}
and Condition S holds. It is easy to find examples that certify {Condition~C $\not\Rightarrow$ Condition~S}.  By Example~\ref{uniqueness_example}, {Condition~S $\not\Rightarrow$ Condition~U}. In~\cite{domanov2013uniqueness} it is asked whether Condition~H $\Rightarrow$ Condition~C. Condition~U is theoretically computable, as it can be phrased as an ideal membership problem, however we are unaware of an efficient implementation. Conditions~C,~H, and~S are clearly easy to check.

The following theorem contains every uniqueness criterion of DLS for which we are aware of an efficient implementation. This theorem is stated in terms of Condition~U to maintain generality, however, only the implied statements in which Condition~U is replaced by Condition~H or~C  (see~\eqref{implications}) have an efficient implementation that we are aware of.
%Our Conjecture~\ref{k-gen} would generalize the version of Theorem~\ref{uniqueness} with Condition~U replaced by Condition~H, to the statement that Condition~H alone certifies uniqueness.

%We have omitted a few statements involving Condition~W that appear in~\cite{domanov2013uniqueness,domanov2013uniqueness2}, as they are more complicated to state and even more difficult to check than the statements involving Condition~U.

{\begin{theorem}\label{uniqueness}
Let $n\geq 2$ be an integer, let $\V=\V_1\otimes\V_2\otimes\V_3$ be a tripartite vector space over a field $\field$, and let
\begin{align}
\{x_a : a \in [n]\} \subseteq \pro{\V_1 : \V_2 : \V_3}
\end{align}
be a set of product tensors. Suppose that Condition~U holds, and any one of the following conditions holds: 
\begin{enumerate}[align=left]
%\item \hfill $k_{\sigma(1)}+\max\{ \min\{k_{\sigma(2)},k_{\sigma(3)}-1\},\min\{k_{\sigma(2)}-1,k_{\sigma(3)}\}\}\geq n+1$ \hfill 
%\refstepcounter{equation}(\textup{\theequation})
\item \leavevmode\vspace{-\dimexpr\baselineskip + \topsep + \parsep} \begin{center}$k_{1}+\min\{k_{2},k_{3}-1\}\geq n+1.$ \end{center}
\item It holds that $k_2 \geq 2$ and for all $\alpha \in \field^n$,
\begin{align}
\setft{rank}\Big[\sum_{a \in [n]} \alpha_a x_{a,1}\otimes x_{a,3}\Big]\geq n-d_2+2 \quad\text{whenever}\quad \omega(\alpha) \geq n-d_{2}+2.
\end{align}
(Note that this is just Condition U with the first subsystem replaced by the second).
\item There exists a subset $S\subseteq [n]$ with $0 \leq |S| \leq d_{1}$ such that
\begin{enumerate}
\item \leavevmode\vspace{-\dimexpr\baselineskip + \topsep + \parsep} \begin{center} $d_{1}^S=|S|.$ \end{center}
\item \leavevmode\vspace{-\dimexpr\baselineskip + \topsep + \parsep} \begin{center} $d_{2}^{[n]\setminus S}=n-|S|.$\end{center}
\item For any linear map $\Pi \in \Lin(\V_{1})$ with ${\ker(\Pi) = \spn \{x_{a ,1}: a \in S\}}$, scalars $\alpha_1,\dots, \alpha_n \in \field$, and index $b \in [n]\setminus S$ such that
\begin{align}
\sum_{a \in [n]\setminus S} \alpha_a & \Pi x_{a, 1} \otimes x_{a, 3} = \Pi x_{b, 1} \otimes z \quad \text{ for some } \quad z \in \V_{\sigma(3)},
\end{align}
it holds that $\omega(\alpha)\leq 1$.
\end{enumerate}

\item There exists a permutation $\tau \in S_n$ for which the matrix
\begin{align}
X_{1}^\tau = (x_{\tau(1),1},\dots, x_{\tau(n),n})
\end{align}
has reduced row echelon form
\begin{align}\label{Y}
Y=\left[\begin{array}{@{}c | c@{}}
  \begin{matrix}
  1 & & \\
   & \ddots &\\
   && 1
  \end{matrix}
  & \begin{matrix}
   & & \\
   & {\normalfont \Huge Z} &\\
   && 
  \end{matrix}
%  \\
%  \hline
%   &
\end{array}\right],
\end{align}
where $Z \in \Lin(\field^{n-d_{1}},\field^{d_{1}})$ and the blank entries are zero. Furthermore, for each $a \in [d_{1}-1]$, the columns of the submatrix of $Y$ with row index $\{a,a+1,\dots, d_{1}\}$ and column index $\{a, a+1,\dots, n\}$ have k-rank at least two.
\item \leavevmode\vspace{-\dimexpr\baselineskip + \topsep + \parsep} \begin{center} $k_{1}=d_{1}.$ \end{center}
\item It holds that $k_1 \geq 2$ and for all $\alpha \in \field^n$,
\begin{align}
\setft{rank}\Big[\sum_{a \in [n]} \alpha_a x_{a,2}\otimes x_{a,3}\Big]\geq n-k_1+2 \quad\text{whenever}\quad \omega(\alpha) 
\geq n-k_{1}+2.
\end{align}
(Note that this is a stronger statement than Condition U, as it replaces the quantity ${n-d_1+2}$ with the possibly larger quantity $n-k_1+2$.)
\end{enumerate}
Then $\sum_{a\in[n]} x_a$ is a unique tensor rank decomposition.
\end{theorem}}

For each $i \in [5]$, we will refer to Theorem~\hyperref[uniqueness]{\ref{uniqueness}.i} as the statement that Condition~U and the $i$-th condition appearing in Theorem~\ref{uniqueness} imply uniqueness. Theorems~\hyperref[uniqueness]{\ref{uniqueness}.1} and~\hyperref[uniqueness]{\ref{uniqueness}.2} are Corollary~1.23 and Proposition~1.26 in \cite{domanov2013uniqueness2,domanov2014canonical}. The Condition~C version of Theorem~\hyperref[uniqueness]{\ref{uniqueness}.3} is stated in Theorem~2.2 in \cite{sorensen2015coupled}, although the proof is contained in \cite{domanov2013uniqueness,domanov2013uniqueness2,sorensen2015new}. Condition 3b in Theorem~\ref{uniqueness} can be formulated as checking the rank of a certain matrix (see \cite{sorensen2015coupled}). Theorem~\hyperref[uniqueness]{\ref{uniqueness}.4} is a new result that we will prove. The Condition~C version of Theorems~\hyperref[uniqueness]{\ref{uniqueness}.5} and~\hyperref[uniqueness]{\ref{uniqueness}.6} are Theorems~1.6 and~1.7 in~\cite{domanov2014canonical}. It is easy to see that Theorem~\hyperref[uniqueness]{\ref{uniqueness}.4} implies Theorem~\hyperref[uniqueness]{\ref{uniqueness}.5}, which in turn implies Theorem~\hyperref[uniqueness]{\ref{uniqueness}.6} by the arguments used in~\cite{domanov2014canonical}.

Most of these statements have previously only been formulated for $\field=\real$ or $\field=\complex$, however in all of these cases the proof can be adapted to hold over an arbitrary field. The first step in proving all of these statements is to show that Condition~U implies uniqueness in the first subsystem. This is Proposition~4.3 in~\cite{domanov2013uniqueness}, and it is proven using Kruskal's permutation lemma \cite{kruskal1977three} (the proof of the permutation lemma in~\cite{landsberg2012tensors} holds word-for-word over an arbitrary field). In fact, uniqueness in the first subsystem holds even with the assumption $k_1 \geq 2$ removed from Condition~U~\cite{domanov2013uniqueness}.

Recall that Conjecture~\ref{k-gen} would imply that Condition~H alone certifies uniqueness, and would thus generalize the version of Theorem~\ref{uniqueness} with Condition~U replaced by Condition~H. A natural question that then arises is whether Condition~U alone certifies uniqueness. Theorems~\hyperref[uniqueness]{\ref{uniqueness}.4} and~\hyperref[uniqueness]{\ref{uniqueness}.5} are distinguished among the results in Theorem~\ref{uniqueness}, in that the extra conditions they impose beyond Condition~U concern only the first subsystem. One can view these results as further evidence that Condition~U alone certifies uniqueness, as they show that no further conditions on the second and third subsystems are necessary for uniqueness.

Now we prove Theorem~\hyperref[uniqueness]{\ref{uniqueness}.4}, for which we require the following proposition.

\begin{prop}
Condition 4 in Theorem~\ref{uniqueness} holds if and only if there exists a permutation $\tau \in S_n$ such that for each $a \in [d_1-1]$ there exists a linear operator $\Pi_a \in \Lin(\V_1)$ for which
\begin{align}
\Pi_a (x_{\tau(b),1})=0 \quad \text{for all}\quad b \in [a-1],
\end{align}
and
\begin{align}\label{k-rank-thingy}
\krank( \Pi_a x_{\tau(a),1},\dots,\Pi_a x_{\tau(n),1})\geq 2.
\end{align}
\end{prop}
\begin{proof}
Assume without loss of generality that $\V_1=\field^{d_1}$. To see that the first statement implies the second, for each $a\in [d_1-1]$ let $\Pi_a=D_a P$, where $P\in \Lin(\field^{d_1})$ is the invertible matrix for which $PX_1^\sigma=Y$, and $D_a \in \Lin(\field^{d_1})$ is the diagonal matrix with the first $a-1$ entries zero and the remaining entries $1$. It is easy to verify that~\eqref{k-rank-thingy} holds.

%\begin{align}\label{D}
%D_a=\left[\begin{array}{@{}c | c@{}}
%  \begin{matrix}
%  0 & & &&&\\
%   & \ddots &&&&\\
%   && 0&&&\\
%   &&&1&&\\
%   &&&&\ddots&\\
%   &&&&&1
%  \end{matrix}
%  & \begin{matrix}
% &&\\
% &&\\
% &&\\
% &&\\
% &&\\
% &&
%  \end{matrix}
%\end{array}\right],
%\end{align}
%where the lefthand block is a $d_1\times d_1$ diagonal matrix with the first $a-1$ entries zero and the remaining entries $1$, and the last $n-d_1$ columns zero.

Conversely, suppose that the reduced row echelon form of $X_1^\tau$, given by $P X_1^\tau$ for some invertible matrix $P \in \Lin(\field^{d_1})$, does not have the specified form. Then there exists ${a\in [d_1-1]}$ for which the columns of $D_a P X_1^\tau$ have k-rank at most one. Any matrix $\Pi_a\in \Lin(\field^{d_1})$ for which $\Pi_a (x_{\tau(b),1})=0$ for all $b \in [a-1]$ satisfies
\begin{align}
\Pi_a =\Pi_a P^{-1} D_a P.
\end{align}
Since the k-rank is non-increasing under matrix multiplication from the left,~\eqref{k-rank-thingy} does not hold.
\end{proof}

\begin{proof}
[Proof of Theorem~\ref{uniqueness}.4]
Whether the decomposition $\sum_{a\in [n]} x_a$ constitutes a unique tensor rank decomposition is invariant under permutations $\tau \in S_n$ of the tensors, so it suffices to prove the statement under the assumption that the permutation $\tau$ appearing in Condition 4 is trivial. We prove the statement by induction on $d_1$. If $d_1=2$, then Condition~U implies $k_2=k_3=n$, so uniqueness follows from Kruskal's theorem (or Theorem~\ref{tripartite_thm}). For $d_1>2$, suppose $\sum_{a\in [n]} x_a =\sum_{a\in [n]} y_a$ for some set of product tensors
\begin{align}
\{y_a : a \in [n]\} \subseteq \pro{\V_1: \V_2 : \V_3}.
\end{align}
By Proposition~4.3 in~\cite{domanov2013uniqueness} (or rather, the extension of this result to an arbitrary field), there exists a permutation $\sigma \in S_n$ and nonegative integers $\alpha_1,\dots, \alpha_n \in \field^\times$ such that $\alpha_a x_{a,1}=y_{\sigma(a),1}$ for all $a \in [n]$. Let $\Pi_1 \in \Lin(\V_1)$ be any operator for which ${\ker(\Pi_1)=\spn\{x_{a,1}\}}$ and~\eqref{k-rank-thingy} holds (recall that $\tau$ is trivial). Then
\begin{align}
\sum_{a \in [n] \setminus \{1\}} (\Pi_1 x_{a,1})\otimes x_{a,2} \otimes x_{a,3}=\sum_{a \in [n] \setminus \{1\}} (\alpha_{a} \Pi_1  x_{a,1})\otimes y_{\sigma(a),2} \otimes y_{\sigma(a),3}.
\end{align}
Now, $\dim\spn\{\Pi_1 x_{a,1}: a \in [n]\setminus\{1\}\}=d_1-1$, and Condition~U again holds for the set of product tensors 
\begin{align}
\{(\Pi_1 x_{a,1}) \otimes x_{a,2} \otimes x_{a,3} : a \in [n] \setminus \{1\}\}.
\end{align}
Furthermore, these product tensors again satisfy Condition 4 of Theorem~\ref{uniqueness}, so by the induction hypothesis
\begin{align}
(\Pi_1 x_{a,1}) \otimes x_{a,2} \otimes x_{a,3}=(\alpha_a \Pi_1 x_{a,1}) \otimes y_{\sigma(a),2} \otimes y_{\sigma(a),3}\quad \text{for all}\quad a \in [n]\setminus \{1\}.
\end{align}
It follows that $x_a=y_{\sigma(a)}$ for all $a \in [n] \setminus\{1\}$, so $x_1=y_{\sigma(1)}$, which completes the proof.
\end{proof}

We conclude this section by proving Theorem~\ref{connect}, a statement on the multilinear rank of linear combinations of product tensors with large k-ranks, which generalizes results in~\cite{1751-8121-48-4-045303}. We then use Theorem~\ref{connect}, along with a natural generalization of Condition~U to at least three subsystems, to show that if $2n \leq (d_1-1)+\sum_{j=2}^m (k_j-1)+1$, then the decomposition is unique in the first subsystem. We do not claim that this uniqueness result is new, but merely include it to demonstrate one application of Theorem~\ref{connect}.

\begin{theorem}\label{connect}
Let $n \geq 2$, $m \geq 2$, and $r\in \{0,1,\dots,n\}$ be integers, let $\V=\V_1\otimes \dots\otimes \V_m$ be a multipartite vector space over a field $\field$, and let $\{{x_a}: a \in [n] \} \subset \pro{\V_1 : \dots : \V_m}$ be a set of product tensors. For each $j\in [m]$, let
\begin{align}
k_j=\krank (x_{1,j},\dots, x_{n,j}).
\end{align}
If ${n+r\leq\sum_{j=1}^m (k_j-1)+1}$, then for any $\alpha \in \field^n$, $j \in [m]$, it holds that
\begin{align}\label{connect_eq}
\rank_j\big(\sum_{a \in [n]} \alpha_a x_a\big) \geq r+1 \quad \text{whenever} \quad \omega(\alpha) \geq r+1.
\end{align}
Furthermore, Kruskal's theorem implies that if $k_j \geq 2$ in at least three indices $j \in [m]$, then for any subset $ S \subseteq [n]$ with $\abs{S}=r$ and non-zero scalars $\{\alpha_a : a \in S\} \subseteq \field^{\times}$ it holds that $\sum_{a \in S} \alpha_a x_a$ constitutes a unique tensor rank decomposition.
\end{theorem}

In particular, Theorem~\ref{connect} states that if ${n\leq \sum_{j=1}^m (k_j-1)}+1$, then $\{x_1,\dots, x_n\}$ are linearly independent; and if ${n \leq \sum_{j=1}^m (k_j-1)}$, then every product tensor in $\spn\{x_1,\dots, x_n\}$ is a scalar multiple of one of the product tensors $x_1,\dots, x_n$ (these are Proposition 3.1 and Theorem 3.2 of \cite{1751-8121-48-4-045303}). Theorem~\ref{connect} can be viewed as a family of statements that interpolate between the results of \cite{1751-8121-48-4-045303} (the cases $r=0$ and $r=1$), and Kruskal's theorem (the case $r=n$).

\begin{proof}[Proof of Theorem~\ref{connect}]
We first use Kruskal's theorem to prove the second statement that $\sum_{a \in S} \alpha_a x_a$ constitutes a unique tensor rank decomposition. The cases $r=0$ and $r=1$ are trivial, so assume $r \geq 2$. Let $k_j^S=\krank(x_{a,j} : a \in S)$, and note that $k_j^S= \min\{k_j, r\}$ for each $j \in [m]$. It is straightforward to verify that
\begin{align}
2r-1 \leq \sum_{j=1}^m (k_j^S-1),
\end{align}
which completes the proof of the second statement by Kruskal's theorem (Theorem~\ref{kruskal}).

For the first statement, it suffices to consider the case $r\leq n-1$, and to prove that for any subset $S \subseteq [n]$ of size ${r+1 \leq \abs{S} \leq n}$, and index $j\in [m]$, it holds that $\rank_j\big(\sum_{a \in S} x_a\big) \geq r+1$ (the scalars $\alpha_a$ can be absorbed into the $x_a$). Let $k_{\hat{j}}^S=\krank(x_{a,\hat{j}}: a \in S)$, and note that
\begin{align}
k_{\hat{j}}^S\geq \min\Big\{\sum_{i \in [m]\setminus\{j\}} (k_i-1)+1, \abs{S}\Big\}
\end{align}
by Lemma~1 in~\cite{sidiropoulos2000uniqueness}. Similarly, $k_j^S=\krank(x_{a,j}: a \in S) = \min\{k_j, \abs{S}\}$.
It follows that $r+1 \leq k_j^S + k_{\hat{j}}^S-\abs{S}$, so by Sylvester's rank inequality \cite{horn2013matrix}, ${\rank_j\big(\sum_{a \in S} x_a\big) \geq r+1}$.
\end{proof}
Now we use Theorem~\ref{connect}, and a generalization of Condition~U to the case of at least three subsystems, to prove a sufficient condition for uniqueness in one subsystem. Proposition 4.3 in~\cite{domanov2013uniqueness} states that, in the case of three subsystems, Condition~U implies the decomposition is unique in the first subsystem (even without the condition $k_1 \geq 2$). It is straightforward to verify that this statement can be generalized to the case of at least three subsystems as follows.

\begin{prop}\label{Ugen}
Let $n \geq 2$ and $m \geq 3$ be integers, let ${\V=\V_1\otimes \cdots \otimes \V_m}$ be a multipartite vector space over a field $\field$, and let
\begin{align}
\{x_a : a \in [n] \}\subseteq \pro{\V_1: \dots : \V_m}
\end{align}
be a set of product tensors with $d_1=\dim\spn\{x_{a,1}: a\in[n]\}$.
If
\begin{align}\label{Ugeneq}
\rank(\sum_{a \in [n]} \alpha_a x_{a,\hat{1}}) \geq n-d_1+2\quad\text{whenever}\quad\omega(\alpha)\geq n-d_1+2,
\end{align}
%\begin{align}
%\rank(\sum_{a \in [n]} \alpha_a x_{a, 2}\otimes\dots \otimes x_{a,m}) \geq n-d_1+2\quad\text{whenever}\quad \omega(\alpha)\geq n-d_1+2,
%\end{align}
then the decomposition $\sum_{a\in[n]} x_a$ is unique in the first subsystem.
\end{prop}
Equation~\eqref{Ugeneq}, paired with the condition $k_1 \geq 2$, is a natural generalization of Condition~U to the case of at least three subsystems.

Now we use Theorem~\ref{connect} and Proposition~\ref{Ugen} to prove that if
\begin{align}
{2n \leq (d_1-1)+\sum_{j=2}^m (k_j-1)+1},
\end{align}
then the decomposition is unique in the first subsystem. For any tensor $v$ and subsystem index $j$, it holds that $\rank_j(v)\leq \rank(v)$. Thus, the case $r=n-d_1+1$ in Theorem~\ref{connect} combined with Proposition~\ref{Ugen} implies that the decomposition is unique in the first subsystem. Theorem~\ref{connect} is actually overkill for this statement, as it would suffice to prove that $\rank_j(\sum_{a\in [n]} \alpha_a x_{a,\hat{1}}) \geq n-d_1+2$ for a single index $j \in \{2,3,\dots, m\}$.

%-----------------------------------------------------------------------------%
\section{Corollaries to the main conjecture on tensor rank and linearly dependent sets of product tensors}\label{consequences}

In this section we prove several corollaries to Theorem~\ref{s_cases}, and mention more general statements that would follow from Conjecture~\ref{conjecture}. We observe applications of these results to quantum information theory, linear preserver problems, and uniqueness criteria.

The first corollary is an upper bound on the number of subsystems $j\in [m]$ for which a circuit of product tensors can have rank greater than one. Our bound improves a result of Ballico \cite{ballico2020linearly}, and is sharp (see Section~\ref{conjecture_kruskal}).

\begin{cor}\label{linincor}
\sloppy
Let $n$ and $m$ be positive integers, and let $\V=\V_1\otimes \dots\otimes \V_m$ be a multipartite vector space over a field $\field$. If a set of product tensors $\{{x_a}: a \in [n] \}\subset \pro{\V_1 : \dots : \V_m}$ forms a circuit, then ${\dim\spn\{ {x_{a, j}}: a \in [n]\} >1}$ for at most $n-2$ indices $j \in [m]$.
\end{cor}
\begin{proof}
The result follows immediately from Theorem~\ref{sep_combo}, since circuits do not split.
\end{proof}

The next corollary follows immediately from Corollary~\ref{linincor}, and was used in \cite{westwick1967, johnston2011characterizing} to characterize the invertible linear operators in $\Lin(\V)$ preserving $\pro{\V_1 : \dots : \V_m}$. It would be interesting to see whether our more general results could be used to characterize preservers of tensor rank $r \geq 2$.
\begin{cor}[\cite{westwick1967, johnston2011characterizing}]\label{l1} Let $m \geq 1$ be an integer, let $\V=\V_1\otimes \dots\otimes \V_m$ be a multipartite vector space over a field $\field$, and let $x_1, x_2 \in \pro{\V_1 : \dots : \V_m}$ be product tensors. Then the following statements are equivalent:
\begin{enumerate}
\item There exists at most a single index $j \in [m]$ for which $\dim\spn \{{x_{1,j}}, x_{2,j}\}=2$.
\item For some non-zero scalars $\alpha_1, \alpha_2 \in \field^\times$, it holds that\\${\alpha_1{x_1}+\alpha_2 {x_2}\in \pro{\V_1 : \dots : \V_m}\cup\{0\}}$.
\item For all scalars $\alpha_1, \alpha_2 \in \field$, it holds that $\alpha_1{x_1}+\alpha_2 {x_2}\in \pro{\V_1 : \dots : \V_m} \cup \{0\}$.
\end{enumerate}
\end{cor}

Now we use Corollary~\ref{l1} to provide an alternate proof of one of the main mathematical results in~\cite{PhysRevA.95.032308}, which classifies two-dimensional subspaces of multipartite space according to how many one-dimensional subspaces they contain that consist entirely of product tensors. This result is interpreted in~\cite{PhysRevA.95.032308} in the context of quantum information theory as a classification of entanglement in rank-two density matrices, by identifying a density matrix with its eigenspace.

\begin{cor}[Theorem 11 in \cite{PhysRevA.95.032308}]\label{geo}
Let $m \geq 2$ be an integer and let $\V=\V_1\otimes \dots\otimes \V_m$ be a multipartite vector space over a field $\field$. Then every two-dimensional subspace $\W \subseteq \V$ falls into one of the following four categories.
\begin{enumerate}
\item $\W \subseteq \pro{\V_1 : \dots : \V_m} \cup \{0\}$.
\item \sloppy There exist precisely two distinct one-dimensional subspaces of $\W$ contained in $\pro{\V_1 : \dots : \V_m} \cup \{0\}$, and every other tensor in $\W$ is non-product.
\item \sloppy There exists precisely one one-dimensional subspace of $\W$ contained in $\pro{\V_1 : \dots : \V_m}\cup \{0\}$, and every other tensor in $\W$ is non-product.
\item Every non-zero tensor in $\W$ is non-product.
\end{enumerate}
\end{cor}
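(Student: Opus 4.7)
The plan is to classify the subspace $S$ by counting the number of one-dimensional subspaces (lines) contained in $\pro{\X_1:\dots:\X_m} \cup \{0\}$, and then use Corollary~\ref{l1} to show that having three or more such product lines forces every non-zero vector of $S$ to be a product vector. Since the four listed categories correspond exactly to having infinitely many, two, one, or zero product lines, this count gives a complete classification.

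First I would observe that categories 2, 3, and 4 are mutually exclusive and are defined directly by the number of product lines (2, 1, and 0 respectively), while category 1 is the case in which every line in $S$ is a product line. So to prove the corollary it suffices to show that if $S$ contains at least three distinct product lines, then $S \subseteq \pro{\X_1 : \dots : \X_m} \cup \{0\}$.

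To establish this reduction, I would pick representatives $x_1, x_2, x_3 \in \pro{\X_1 : \dots : \X_m}$ on three distinct lines of $S$. Because $\dim S = 2$ and the line spanned by $x_1$ differs from the line spanned by $x_2$, the pair $\{x_1, x_2\}$ forms a basis of $S$, and so $x_3 = \alpha_1 x_1 + \alpha_2 x_2$ for some scalars $\alpha_1, \alpha_2 \in \field$. Since $x_3$ is not parallel to either $x_1$ or $x_2$, both scalars must be non-zero. The vector $x_3$ is a product vector by assumption, so statement 2 of Corollary~\ref{l1} holds for $x_1$ and $x_2$, and hence statement 1 of Corollary~\ref{l1} holds as well: every linear combination $\beta_1 x_1 + \beta_2 x_2$ lies in $\pro{\X_1 : \dots : \X_m} \cup \{0\}$. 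As $\{x_1, x_2\}$ spans $S$, this yields $S \subseteq \pro{\X_1 : \dots : \X_m} \cup \{0\}$, placing $S$ in category 1.

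The argument is essentially bookkeeping; no step looks hard once Corollary~\ref{l1} is in hand. The only subtle point is making sure the case split is exhaustive and the categories disjoint, which just amounts to confirming that the number of product lines in a two-dimensional subspace is either $0$, $1$, $2$, or infinite, with the last case forced whenever there are at least three product lines by the reduction above.
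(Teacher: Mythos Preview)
Your proposal is correct and follows essentially the same approach as the paper: both arguments reduce to Corollary~\ref{l1} once two product lines are found. The only difference is which implication you invoke---you use $(2\Rightarrow 1)$ directly (a third product line witnesses statement~2, hence statement~1 holds and $S$ is entirely product), whereas the paper picks a product basis $x_1,x_2$ and applies the equivalence with statement~3, checking the number of subsystems $j$ with $\dim\spn\{x_{1,j},x_{2,j}\}=2$ to decide between categories~1 and~2.
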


\begin{proof}
If every non-zero tensor in $\W$ is non-product, then $\W$ lies in the fourth category. If there exists precisely one one-dimensional subspace of $\W$ contained in $\pro{\V_1 : \dots : \V_m}\cup \{0\}$, then $\W$ lies in the third category. If there exist two distinct one-dimensional subspaces of $\W$ contained in $\pro{\V_1 : \dots : \V_m}\cup\{0\}$, then let $x_1, x_2$ be non-zero tensors contained in the first and second subspace respectively, so ${\W= \spn\{x_1,x_2\}}$. If there exists more than one index $j \in [m]$ for which ${\dim\spn\{x_{1,j}, x_{2,j}\}>1}$, then $\W$ lies in the second category by Corollary~\ref{l1}. If there exists one index $j \in [m]$ for which $\dim\spn\{x_{1,j}, x_{2,j}\}>1$, then $\W$ lies in the first category by Corollary~\ref{l1}.
\end{proof}

We next observe a consequence of Conjecture~\ref{conjecture} on the tensor rank of linear combinations of product tensors that would generalize Corollary~\ref{linincor}, and observe a connection between this result and Condition~U in the study of uniqueness criteria.

%a corollary to Theorem~\ref{sep_combo} with connections to Condition~U in the study of uniqueness conditions (see Section~\ref{uniqueness_applications}), and also to the generalization of Condition~U to the case of more than three subsystems (see~\eqref{Ugeneq}).

\begin{cor}\label{conjecture1}
The following statement is a corollary to Theorem~\ref{s_cases} in the restricted tripartite and restricted multipartite cases,
%when $m=3$, $d_1,d_2\geq 1$, and $1 \leq d_3\leq2$; or when $m$ is arbitrary, $d_1 \geq 1$, and ${1\leq d_2, \dots, d_m \leq 2}$.
and is merely a (conjectural) corollary to Conjecture~\ref{conjecture} in all other cases.

Let $n \geq 2$, $m \geq 3$, and ${r \in \{1,\dots, n-1\}}$ be integers, let $\V=\V_1\otimes\dots\otimes \V_m$ be a multipartite vector space over a field $\field$, and let ${\{{x_a}: a \in [n] \} \subset \pro{\V_1 : \dots : \V_m}}$ be a set of product tensors. For each $j \in [m]$, let
\begin{align}
d_j=\dim\spn\{{x_{a,j}} : a \in [n] \}.
\end{align}
If $n+r \leq \sum_{j=1}^m (d_j-1)+1$ and $\rank(\sum_{a \in [n]} x_a)\leq r$, then there exists a subset $S \subseteq [n]$ of size $r \leq \abs{S}\leq n$ for which
\begin{align}\label{Ulike}
\rank(\sum_{a \in S} x_a) < r.
\end{align}
\end{cor}

Note that the restricted multipartite case of Corollary~\ref{conjecture1} with $r=1$ is essentially Corollary~\ref{linincor}. In the bipartite case, it follows from Theorem~\ref{bipartite_prop} and similar arguments as the proof of Corollary~\ref{conjecture1} below that $n+r \leq (d_1-1)+(d_2-1)+1$ implies ${\rank(\sum_{a \in [n]} x_a)> r}$.

Corollary~\ref{conjecture1} would imply that if a set of product tensors
\begin{align}
{\{{x_a}: a \in [n] \} \subset \pro{\V_1 : \dots : \V_m}}
\end{align}
satisfies $d_1\geq 3$, $2n\leq \sum_{j=1}^m (d_j-1)$, and $\rank(\sum_{a\in[n]} \alpha_a x_{a,\hat{1}})=n-d_1+2$ for some non-zero scalars $\alpha_1,\dots,\alpha_n \in \field^{\times}$, then the multipartite generalization of Condition~U (see Equation \eqref{Ugeneq}) does not hold. We are not sure how useful this statement would be, as the condition $\rank(\sum_{a\in[n]} \alpha_a x_{a,\hat{1}})=n-d_1+2$ is quite specific.

\begin{proof}[Proof of Corollary~\ref{conjecture1}]
Since $\rank(\sum_{a\in [n]} x_a)\leq r$, there exist product tensors
\begin{align}
\{x_{n+1},\dots,x_{n+r}\} \subseteq \pro{\V_1 : \dots : \V_m}
\end{align}
for which
\begin{align}\label{k_sum_zero}
\sum_{a \in [n+r]} x_a =0.
\end{align}
Since $n+r \leq \sum_{j=1}^m (d_j-1)+1$, then by the non-minimal version of Conjecture~\ref{conjecture} (or Theorem~\ref{s_cases} in the special cases), there exists a subset $R \subseteq [n+r]$ of size ${1 \leq \abs{R} \leq n+r-1}$ such that
\begin{align}\label{k_subsum_zero}
\sum_{a \in  R} x_a= 0.
\end{align}
Define
\begin{align}
 S&= R \cap [n],\\
 T&= R \cap \{n+1, \dots, n+r\}.
\end{align}
We first consider the case $\abs{ S}>\abs{T}$. If $\abs{ S}\geq r$ we are done. If $\abs{ S}\leq r-1$, then for any subset $Q \subseteq [n]$ of size $\abs{Q}\geq r$ with $S\subseteq Q$, we have
\begin{align}
\sum_{a \in  Q} x_a= \bigg[\sum_{a \in  Q \setminus  S}  x_a\bigg]-\bigg[\sum_{b \in  T} x_b\bigg],
\end{align}
so $\rank(\sum_{a \in  Q} x_a)<r$.

Now we consider the case $\abs{S}\leq\abs{T}$. We have 
\begin{align}
\bigabs{[n]\setminus S}&=n-\abs{S} \\
&\geq r-\abs{T} + (n-r)\\
& > r-\abs{T}\\
&=\bigabs{[r]\setminus  T},
\end{align}
where the strict inequality follows from $n-r \geq 1$. But equations~\eqref{k_sum_zero} and~\eqref{k_subsum_zero} imply
\begin{align}
\sum_{a \in [n+r] \setminus  R} x_a=0.
\end{align}
The statement follows from the previous arguments with $R$ replaced by ${[n+r]\setminus  R}$.
\end{proof}

%Now we prove a corollary to Theorem~\ref{sep_combo} that gives a condition under which any two tensor rank decompositions agree on a subset. We mention a more general result that would follow from Conjecture~\ref{conjecture}.

Now we observe a consequence of Conjecture~\ref{conjecture} that gives a condition under which any two tensor rank decompositions agree on a subset.

\begin{cor}\label{ana_cor}
The following statement is a corollary to Theorem~\ref{s_cases} when $m=3$, $r_1,r_2\geq 1$, and $1 \leq r_3\leq2$; or when $m$ is arbitrary, $r_1 \geq 1$, and $1\leq r_2, \dots, r_m \leq 2$. This statement is merely a (conjectural) corollary to Conjecture~\ref{conjecture} in all other cases.

Let $n \geq 2$ and $m \geq 3$ be integers, let $\V=\V_1\otimes \dots\otimes \V_m$ be a multipartite vector space over a field $\field$, and let $v \in \V$ be a tensor of rank $n$ and multilinear rank $(r_1,\dots, r_m)$. If %For each $j \in [m]$, let $d_j$ be a positive integer no less than the rank of $v$ when $v$ is viewed as an element of $\Lin(\V_j^*,V_{\hat{j}})$ (see Section~\ref{mp}).
${2n \leq \sum_{j=1}^m (r_j-1)+1}$, then for any two sets of product tensors
\begin{align}
\{x_a : a \in [n]\} &\subseteq \pro{\V_1:\cdots:\V_m},\\
\{y_a : a \in [n]\} &\subseteq \pro{\V_1:\cdots:\V_m}
\end{align}
for which
\begin{align}
v=\sum_{a\in [n]} x_a=\sum_{a\in[n]} y_a,
\end{align}
there exists a permutation $\sigma \in \setft{S}_n$ and a subset ${S \subset [n]}$ of size $1 \leq \abs{S} \leq n-1$ such that
\begin{align}\label{subset_mult}
\sum_{a \in S} x_a = \sum_{a \in S} y_{\sigma(a)}.
\end{align}
\end{cor}

As a simple example, consider the product tensors
\begin{align}\label{simple_example}
x_1&=e_1\otimes e_1 \otimes e_1\\
x_2&=e_2 \otimes e_2 \otimes e_1\\
x_3&=e_3 \otimes e_3 \otimes e_2,
\end{align}
and let $v=x_1+x_2+x_3$. Then $\rank(v)=3$, and Corollary~\ref{ana_cor} verifies that for any other tensor rank decomposition $v=y_1+y_2+y_3$, there exists $a,b \in [3]$ such that $x_a=y_b$.

Corollary~\ref{ana_cor} can be compared to Kruskal's theorem, which gives sufficient conditions for~\eqref{subset_mult} to hold for every singleton $S=\{a\}\subseteq [n]$.

%However, our result is of a somewhat different flavour than Kruskal's theorem, in that it conditions only on intrinsic properties of the tensor itself (the tensor rank and multilinear rank), as opposed to conditioning on a particular set of product tensors that decompose it. (Admittedly, Kruskal's theorem gives sufficient conditions for such a decomposition to constitute a unique tensor rank decomposition, which would then make these product tensors, in a sense, ``intrinsic" to the tensor they decompose.)

\begin{proof}[Proof of Corollary~\ref{ana_cor}]
Let $x_{n+a}=-y_a$ for each $a \in [n]$ for notational convenience. The assumption that the rank of $v \in \Lin(\V_j^*,V_{\hat{j}})$ equals $r_j$ for each $j \in [m]$ implies ${\dim\spn\{x_{a,j} : a \in [2n] \}\geq r_j}$ for each $j \in [m]$, so by Conjecture~\ref{conjecture} (or Theorem~\ref{s_cases} in the special cases), $\{x_1,\dots, x_{2n}\}$ splits, and hence is not minimal. Thus, there exists a subset $T \subseteq [2n]$ of size $1 \leq \abs{T} \leq 2n-1$ such that
\begin{align}
\sum_{a \in T} x_a =0.
\end{align}
It furthermore must hold that $\abs{T \cap [n]}=\abs{T \cap \{n+1,\dots,2n\}}$, for inequality would yield a decomposition of $v$ into a sum of less than $n$ product tensors, contradicting the fact that $v$ has tensor rank $n$. The result follows.
\end{proof}

%-----------------------------------------------------------------------------%
\section{Proving special cases of the main conjecture}\label{special_cases}

In this section we prove Theorem~\ref{s_cases}, which includes the bipartite, restricted tripartite, and restricted multipartite cases of Conjecture~\ref{conjecture}. In Proposition~\ref{bipartite_prop} and Theorem~\ref{tripartite_thm} we actually prove more general statements than the bipartite and restricted tripartite cases, respectively. By Propositions~\ref{field} and \ref{prop:min_split}, we can assume the underlying field is infinite and prove whichever version (non-minimal or splitting) of a special case of Conjecture~\ref{conjecture} is convenient. Proposition~\ref{bipartite_prop} is a straightforward consequence of Sylvester's rank inequality \cite{horn2013matrix}. The proofs of Theorems~\ref{tripartite_thm} and~\ref{sep_combo} are more involved, and use similar techniques to one another.

\begin{prop}[Bipartite case of Conjecture~\ref{conjecture}]\label{bipartite_prop}
Let $n \geq 2$ be an integer, let $\V=\V_1\otimes \V_2$ be a bipartite vector space over a field $\field$, and let
\begin{align}
\{x_a={x_{a,1}\otimes x_{a,2}}: a \in [n] \} \subset \pro{\V_1 : \V_2}
\end{align}
be a set of product tensors. For each $j \in [2]$, let
\begin{align}
d_j=\dim\spn\{{x_{a,j}} : a \in [n] \}.
\end{align}
If $n\leq d_1+d_2-1$, then
\begin{align}\label{bip}
\sum_{a \in [n]}  \alpha_a x_a \neq 0 \qquad \text{ for all } \qquad \alpha_1,\dots, \alpha_n \in \field^{\times}.
\end{align}
\end{prop}
Note that \eqref{bip} implies $\{x_1,\dots, x_n\}$ is not minimal, so Proposition~\ref{bipartite_prop} contains the bipartite case of Conjecture~\ref{conjecture}.
\begin{proof}[Proof of Proposition~\ref{bipartite_prop}]
It suffices to prove that $\sum_{a \in [n]} x_a \neq 0$, as the non-zero scalars $\alpha_1,\dots, \alpha_n$ can be absorbed into $x_1,\dots, x_n$. For each $j \in [2]$, let
\begin{align}
X_j=( x_{1,j}, \dots, x_{n,j}) \in \Lin(\field^n, \V_j).
\end{align}
Then,
\begin{align}
X_1 X_2^\t=\sum_{a \in [n]} x_{a,1} x_{a,2}^\t.
\end{align}
Thus,
\begin{align}
d_1+d_2& =\rank(X_1)+\rank(X_2^\t)\\
&\leq \rank(X_1 X_2^\t)+n,
\end{align}
where the second line is Sylvester's rank inequality \cite{horn2013matrix}.
Since $n \leq d_1+d_2-1$, this implies $\rank(X_1 X_2^\t)\geq 1$. By the isomorphism $\Lin(\V_2,\V_1) \cong \V_1 \otimes \V_2^*$ (where $\V_2^*$ is any dual space of $\V_2$), it holds that $\sum_a x_a \neq 0$,
which completes the proof.
\end{proof}
Now we prove a (more general statement than) the restricted tripartite case of Conjecture~\ref{conjecture}.

\begin{theorem}[Restricted tripartite case of Theorem~\ref{s_cases}]\label{tripartite_thm}
Let $n \geq 2$ be an integer, let ${\V=\V_1\otimes\V_2\otimes \V_3}$ be a tripartite vector space over a field $\field$, and let
\begin{align*}
\{x_a={x_{a,1}\otimes x_{a,2}}\otimes x_{a,3}: a \in [n] \} \subset \pro{\V_1 : \V_2:\V_3}
\end{align*}
 be a set of product tensors. For each $j \in [3]$, let
\begin{align}
d_j=\dim\spn\{{x_{a,j}} : a \in [n] \}.
\end{align}
If $d_3 \leq 2$ and $n \leq d_1+d_2$, then at least one of the following statements holds:
\begin{enumerate}
\item $\sum_{a \in [n]}  \alpha_a x_a \neq 0$ \qquad for all \qquad $\alpha_1,\dots, \alpha_n \in \field^{\times}$.
\item $\{x_{1,1},\dots,x_{n,1}\}$ splits, and $\{x_{1,2},\dots,x_{n,2}\}$ splits (possibly with respect to different partitions of $[n]$).
\end{enumerate}
\end{theorem}
\begin{remark} First note that Theorem~\ref{tripartite_thm} implies the restricted tripartite case of Conjecture~\ref{conjecture}. By Proposition~\ref{field} it suffices to consider the case that $\field$ is infinite. If Statement 1 holds, then $\{x_1,\dots, x_n\}$ splits. It remains to show that if $\{x_{1,2},\dots,x_{n,2}\}$ splits, then $\{x_1,\dots,x_n\}$ splits. This will follow from basic arguments that do not rely on Theorem~\ref{tripartite_thm}.

Let $S \sqcup T= [n]$ be a non-trivial partition such that
\begin{align}
\spn\{x_{1,2},\dots,x_{n,2}\}=\spn\{x_{a,2} : a \in S\} \oplus \spn\{x_{a,2} : a \in T\}.
\end{align}
We prove that $\{x_1,\dots, x_n\}$ also split with respect to $S\sqcup T$. Suppose toward contradiction there exists a non-zero tensor
\begin{align}
v \in \spn\{x_a : a \in S\} \cap \spn\{x_a : a \in T\}.
\end{align}
Then we can write
\begin{align}
v=\sum_{a \in S} \alpha_a x_a=\sum_{b \in T} \beta_b x_b
\end{align}
for some scalars $\alpha_a\in \field$ not all zero and scalars $\beta_b \in \field$ not all zero. Let $f_1 \in \V_1^*$ and $f_3 \in \V_3^*$ be non-zero linear functionals such that
\begin{align}
(f_1 \otimes \I \otimes f_3)v &\neq 0,
\end{align}
and for all $a \in [n]$, $f_1(x_{a,1}) \neq 0$ and $f_3({ x_{a,3}}) \neq 0$. Then
\begin{align}
(f_1 \otimes \I \otimes f_3)v=\sum_{a \in S} \alpha_a f_1({ x_{a,1}})f_3({ x_{a,3}}) x_{a,2}=\sum_{b \in T} \beta_b f_1({ x_{b,1}})f_3({ x_{b,3}}) x_{b,2}\in \V_2
\end{align}
is a non-zero vector in
\begin{align}
\spn\{x_{a,2} : a \in S\} \cap \spn\{x_{a,2} : a \in T\}.
\end{align}
This contradicts the fact that $\{x_{1,2},\dots,x_{n,2}\}$ splits with respect to $S \sqcup T$, and completes the proof.
\end{remark}
\begin{remark}
It is natural to ask whether the condition that $\{x_{1,3},\dots, x_{n,3}\}$ splits can be added to Statement 2 in Theorem~\ref{tripartite_thm}. It cannot, as evidenced by the example
\begin{align}
\{\;\pm \; e_1 \otimes e_1\otimes e_1,\;\pm \; e_2 \otimes e_2 \otimes e_2, \;\pm \; e_3 \otimes e_3 \otimes (e_1+e_2)\},
\end{align}
in which $6=n \leq d_1+d_2+d_3-2=6$, Statement 1 does not hold, and $\{x_{1,3},\dots, x_{n,3}\}$ does not split. It is also natural to ask whether Theorem~\ref{tripartite_thm} holds when $d_1, d_2,d_3 \geq 3$. It does not, as evidenced by the example
\begin{align*}
\{&\; \pm \; e_1 \otimes e_1 \otimes e_1,\; \pm \; e_2 \otimes e_2 \otimes e_2,\; \pm \; e_3 \otimes e_3 \otimes e_3,\\
&\; \pm \; e_4 \otimes (e_1+e_2+e_3) \otimes (e_1+e_2+e_3)\},
\end{align*}
in which $8=n\leq d_1+d_2+d_3-2=8$, Statement 1 does not hold, and only $\{x_{1,3},\dots, x_{n,3}\}$ splits. Also consider the example
\begin{align*}
\{&\; \pm \; e_1 \otimes e_1 \otimes e_1,\; \pm \; e_2 \otimes e_2 \otimes e_2,\; \pm \; e_3 \otimes e_3 \otimes e_3,\\
&\; \pm \; e_4 \otimes e_4 \otimes e_4,\; \pm \; u \otimes u \otimes u\},
\end{align*}
where $u=e_1+e_2+e_3+e_4$. In this case, $10=n \leq d_1+d_2+d_3-2=10$, Statement 1 does not hold, and $\{x_{1,j},\dots,x_{n,j}\}$ does not split for all $j \in [3]$.
\end{remark}
\begin{proof}[Proof of Theorem~\ref{tripartite_thm}]
It suffices to prove Theorem~\ref{tripartite_thm} with Statement 1 replaced by ${\sum_a x_a \neq 0}$, as the scalars $\alpha_1,\dots, \alpha_n \in \field^{\times}$ can be absorbed into $x_1,\dots, x_n$. We can assume $\field$ is infinite by Proposition~\ref{field}. We can also assume $d_3=2$, otherwise this reduces to the bipartite case. 

If $n<d_1+d_2$, let $f \in \V_3^*$ be any linear functional such that $f({ x_{a,3}})\neq 0$ for all $a \in [n]$. Then
\begin{align}
(\I \otimes \I \otimes f) \sum_{a \in [n]} x_a\neq 0
\end{align}
by Proposition~\ref{bipartite_prop}, so 
\begin{align}
\sum_{a \in [n]} {x_{a,1}\otimes x_{a,2}\otimes x_{a,3}}\neq 0.
\end{align}

It remains to consider the case $n=d_1+d_2$ and $d_3=2$. Suppose without loss of generality that $\{x_{1,1},\dots, x_{d_1,1}\}$ is linearly independent. If $\{x_{d_1+1,2},\dots, x_{d_1+d_2,2}\}$ is not linearly independent, then there exists $b \in [d_1]$ for which $x_{b,2} \notin \spn \{x_{d_1+1,2},\dots, x_{d_1+d_2,2}\}$. Let $f_2 \in \V_2^*$ be any linear functional such that $f_2({ x_{b,2}}) \neq 0$ and $f_2({x_{a,2}})=0$ for all ${a\in \{d_1+1,\dots, d_1+d_2\}}$. Then
\begin{align}
(\I \otimes f_2 \otimes \I) \sum_{a \in [n]} {x_{a,1}\otimes x_{a,2}\otimes x_{a,3}}&=\sum_{a\in [d_1]} f_2({ x_{a,2}}) x_{a,1} \otimes x_{a,3}\\
&\neq 0,
\end{align}
where the inequality follows from the fact that $\{x_{1,1},\dots, x_{d_1,1}\}$ is linearly independent. Thus,
\begin{align}
\sum_{a \in [n]}  {x_{a,1}\otimes x_{a,2}\otimes x_{a,3}}\neq 0.
\end{align}
Now suppose $\{x_{d_1+1,2},\dots, x_{d_1+d_2,2}\}$ is linearly independent, and assume toward contradiction that either $\{x_{1,1},\dots,x_{n,1}\}$ or $\{x_{1,2},\dots, x_{n,2}\}$ does not split, and
\begin{align}
\sum_{a \in [n]}  {x_{a,1}\otimes x_{a,2}\otimes x_{a,3}}= 0.
\end{align}
By symmetry, we can assume $\{x_{1,2},\dots, x_{n,2}\}$ does not split.

For each $a \in [d_1]$, it holds that
\begin{align}
x_{a,2}=\sum_{b=d_1+1}^{n} \alpha_{a,b} x_{b,2}
\end{align}
for some $\{\alpha_{a,b}\} \subseteq \field$. Let $S_a=\{b \in \{d_1+1,\dots,n\} : \alpha_{a,b}\neq 0\}$. We first  observe that for any $a \in [d_1]$, $b \in S_a$, it holds that ${x_{a,3} \in \spn \{x_{b,3}\}}$. Let $g_1 \in \V_1^*$ be any linear functional such that $g_1({ x_{a,1}}) \neq 0$ and $g_1({x_{c,1}})=0$ for all $c \in [d_1] \setminus \{a\}$, and let $g_2 \in \V_2^*$ be any linear functional such that $g_2({ x_{b,2}})\neq 0$ and $g_2({x_{c,2}}) =0$ for all $c \in [n]\setminus ([d_1] \sqcup \{b\})$. Then
\begin{align*}
(g_1 \otimes g_2 \otimes \I)\sum_{c \in [n]}  {x_{c,1}\otimes x_{c,2}\otimes x_{c,3}}&=g_1({x_{a,1}})g_2({ x_{a,2}})x_{a,3}+g_1({ x_{b,1}})g_2({ x_{b,2}})x_{b,3}\\
&=0.
\end{align*}
Note that
\begin{align}
g_2({ x_{a,2}})=\alpha_{a,b} g_2({ x_{b,2}})\neq 0,
\end{align}
so $g_1({ x_{a,1}})g_2({ x_{a,2}})\neq 0$. It follows that ${x_{a,3} \in \spn \{x_{b,3}\}}$, as claimed.

Consider the set
\begin{align}
S=\bigcup_{a \in [d_1]} (\{a\} \cup S_a).
\end{align}
Note that $S=[n]$, for otherwise
\begin{align}
\spn\{x_{a,2} : a \in [n]\}=\spn\{x_{a,2}:a \in S\} \oplus \{ x_{a,2} : a \in [n] \setminus S\},
\end{align}
contradicting the assumption that $\{x_{a,2} : a \in [n]\}$ does not split. For any $a,c \in [d_1]$, if $S_a \cap S_c \neq \{\}$, then $x_{b,3} \in \spn\{x_{a,3}\}$ for all $b \in S_a \cup S_c \cup \{a\} \cup\{c\}$. Since
\begin{align}
\dim\spn\{x_{a,3} : a \in [n]\} >1,
\end{align}
then there exists a non-trivial partition $T' \sqcup Q'=[d_1]$ such that the sets
\begin{align}
T&=\bigcup_{a \in T'} (\{a\} \cup S_a),\\
Q&=\bigcup_{a \in Q'} (\{a\} \cup S_a)
\end{align}
are disjoint (and partition $[n]$ by the fact that $S=[n]$). But this implies
\begin{align}
\spn\{x_{a,2} : a \in [n]\}=\spn\{x_{a,2}:a \in T\} \oplus \spn\{ x_{a,2} : a \in Q\}.
\end{align}
Indeed,
\begin{align}
\spn\{x_{a,2}:a \in T\}&=\spn\{x_{a,2} : a \in T \cap \{d_1+1,\dots, n\}\}\\
\spn\{x_{a,2}:a \in Q\}&=\spn\{x_{a,2} : a \in Q \cap \{d_1+1,\dots, n\}\},
\end{align}
and hence
\begin{align}
\spn\{x_{a,2}:a \in T\} \cap  \spn\{ x_{a,2} : a \in Q\}=\{0\}.
\end{align}
This contradicts the assumption that $\{x_{a,2} : a \in [n]\}$ does not split.
\end{proof}

Now we prove the restricted multipartite case of Conjecture~\ref{conjecture}.
\begin{theorem}[Restricted multipartite case of Conjecture~\ref{conjecture}]\label{sep_combo}
Let $n\geq 2$ and $m\geq 1$ be integers, let $\V=\V_1\otimes\dots\otimes\V_m$ be a multipartite vector space over a field $\field$, and let
\begin{align}
\{{x_a}: a \in [n] \} \subset \pro{\V_1 : \dots : \V_m}
\end{align}
be a set of product tensors. For each $j \in [m]$, let
\begin{align}
d_j=\dim\spn \{ x_{a,j}: a \in [n]\}.
\end{align}
If $n\leq \sum_{j=1}^m (d_j-1)+1$, $d_1 \geq 1$, and $1\leq d_2,\dots,d_m \leq 2$, then $\{x_a: a \in [n]\}$ splits.
\end{theorem}

To prove Theorem~\ref{sep_combo}, we require the following proposition.

%\begin{prop}\label{lemma:min_parti}
%Let $n\geq 2$ be an integer, let $\V$ be a vector space over a field $\field$, and let ${v_1,\dots, v_n \in \V}$ be non-zero vectors. If $v_1+\dots+v_n=0$, then there exists an integer $s \geq 1$ and a partition ${S_1 \sqcup \dots \sqcup S_s=[n]}$ such that for each $q \in [s]$,
%\begin{align}
%\sum_{a \in S_q} v_a=0,
%\end{align}
%and this sum constitutes a minimal linear dependence of $\{v_a : a \in S_q\}$.
%\end{prop}
%\begin{proof}
%We use induction on $n$. The base case $n=2$ is trivial. Proceeding inductively, if
%\begin{align}
%\sum_{a \in S} v_a \neq 0
%\end{align}
%for all $S \subseteq [n]$ of size $1 \leq \abs{S} <n$, then $v_1,\dots,v_n$ is minimal. Otherwise, there exists a nontrivial partition $S \sqcup T =[n]$ such that
%\begin{align}
%\sum_{a \in S} v_a=\sum_{a \in T}v_a=0.
%\end{align}
%By the induction hypothesis, $S$ and $T$ can be partitioned into minimal subsets, which then partitions $[n]$ into minimal subsets, proving the claim.
%\end{proof}
\begin{prop}\label{lemma:parti}
Let $n, s,$ and $t$ be positive integers. Let $\equiv$ be an equivalence relation on $[n]$, let $E$ be the set of equivalence classes of $[n]$, and let $S_1,\dots, S_s \subseteq [n]$ and $T_1,\dots, T_t \subseteq [n]$
be two collections of non-empty disjoint subsets that satisfy the following three conditions:
\begin{enumerate}
\item For all $q \in [s]$, every element of $S_q$ is equivalent modulo $\equiv$. Likewise, for all $r \in [t]$, every element of $T_r$ is equivalent.
\item Both collections partition $[n]$, i.e.
\begin{align}\label{partitionlemma}
\bigsqcup_{q\in [s]} S_q =\bigsqcup_{r \in [t]}T_r=[n].
\end{align}
\item For any two subsets $Q \subseteq [s], R \subseteq [t]$, if
\begin{align}
\bigsqcup_{q \in Q} S_q = \bigsqcup_{r \in R} T_r=N
\end{align}
for some subset $N \subseteq [n]$, then $N \in \{\{\}, [n]\}$.
\end{enumerate}
Then $\equiv$ is trivial, i.e. $E=\{[n]\}$.
\end{prop}
\begin{proof}
Let $N\in E$ be an equivalence class. By conditions 1 and 2, there exist $Q \subseteq [s]$ and $R \subseteq [t]$ such that
\begin{align}
\bigsqcup_{q \in Q} S_q = \bigsqcup_{r \in R} T_r=N.
\end{align}
Condition 3 implies $N=[n]$, completing the proof.
\end{proof}

\begin{proof}[Proof of Theorem~\ref{sep_combo}]
It suffices to prove that $\{x_1,\dots, x_n\}$ is not minimal in the case when $\field$ is infinite. We use induction on $n$. The base case $n=2$ is trivial. Proceeding inductively, suppose toward contradiction that $\{x_1,\dots,x_n\}\subseteq \pro{\V_1: \cdots: \V_m}$ satisfy ${n\leq \sum_{j=1}^m (d_j-1)+1}$ and are minimal. By absorbing the coefficients of the minimal linear dependence into each product tensor, we may assume
\begin{align}\label{eq:zeroo}
\sum_{a \in [n]} x_a=0,
\end{align}
and this constitutes a minimal linear dependence of $\{x_1,\dots,x_n\}$.

Define an equivalence relation $\equiv$ on $[n]$ by $a \equiv b$ if $x_{a,1}\in \spn\{x_{b,1}\}$. Let $E$ be the set of equivalence classes. For each $A \in E$, let $\Pi_A \in \Lin(\V_1)$ be any operator with ${\ker(\Pi_A)=\spn\{x_{a,1}\}}$ for all $a \in A$. Applying $(\Pi_A \otimes \I)$ to~\eqref{eq:zero} gives
\begin{align}
\sum_{a \in [n] \setminus A} (\Pi_A \otimes \I)x_a=0.
\end{align}
Note that every product tensor $(\Pi_A \otimes \I)x_a$ appearing in this sum is non-zero. Let ${S_1^A,\dots,S_{s_A}^A \subseteq [n]\setminus A}$ be a set of non-empty disjoint subsets that partition $[n] \setminus A$, i.e.
\begin{align}
S_1^A \sqcup \dots \sqcup S_{s_A}^A = [n]\setminus A,
\end{align}
and such that for all $q \in [s_A]$, it holds that
\begin{align}
\sum_{a \in S_q^A} (\Pi_A \otimes \I)x_a=0,
\end{align}
and constitutes a minimal linear dependence of $\{x_a : a \in S_q^A\}$.
%The existence of such a partition follows from Proposition~\ref{lemma:min_parti}. 
For each $A \in E$, $q \in [s_A]$, define
\begin{align}
d_{q,j}^A=
\begin{cases}
\dim\spn\{\Pi_A x_{a,1} : a \in S_q^A\}, & j=1.\\
\dim\spn\{x_{a,j} : a \in S_q^A\},& j>1.
\end{cases}
\end{align}
By the induction hypothesis,
\begin{align}\label{eq:projector_version}
\abs{S^A_q} \geq \sum_{j=1}^m (d^A_{q,j}-1)+2.
\end{align}
Subtracting this inequality from $n\leq \sum_{j=1}^m (d_j-1)+1$ gives
\begin{align}\label{eq:sumdifd}
\sum_{j=1}^m (d_j-d_{q,j}^A) \geq n-\abs{S_q^A}+1.
\end{align}
\begin{claim}\label{claim:parallel}
There exists an index $j \in \{2,\dots,m\}$ and equivalence classes $A \neq B \in E$ such that $d_{q,j}^A < d_j$ and $d_{r,j}^B<d_j$ for all $q \in [s_A]$, $r \in [s_B]$. In particular, $d_j=2$ and $d_{q,j}^A=d_{r,j}^B=1$ for all $q \in [s_A]$, $r \in [s_B]$.  
\end{claim}
\begin{proof}[Proof of claim]
\renewcommand\qedsymbol{$\triangle$}

For each $A \in E$, $q \in [s_A]$, define
\begin{align}
J_q^A= 
\bigsqcup_{j=2}^m 
\begin{cases}
\{j_1,\dots,j_{d_j-d_{q,j}^A}\}, & d_j-d_{q,j}^A \geq 1,\\
\{\}, & d_j-d_{q,j}^A \leq 0.
\end{cases}
\end{align}
(Essentially, $J_q^A$ is a multiset containing each $j \in \{2,\dots, m\}$ with multiplicity ${\max\{0, d_j-d_{q,j}^A\}}$, but regarded as a set by adding subscripts.) To prove the claim, it suffices to find $A \neq B \in E$ such that
\begin{align}
\Big(\bigcap_{q\in [s_A]} J_q^A\Big) \cap \Big(\bigcap_{r\in [s_B]} J_r^B\Big)\neq \{\}.
\end{align}
First note that
\begin{align}
\biggabs{\bigcap_{q \in [s_A]} J^A_q} & \geq \sum_{q \in [s_A]} \abs{J^A_q}-(s_A-1)\biggabs{\bigcup_{r \in [s_A]} J^A_r}\\
&\geq  \abs{A} + \sum_{q \in [s_A]} d_{q,1}^A -(d_1-t_A)\\
&\geq \abs{A}. \label{thirdline}
\end{align}
The first line follows from the standard result that for any two sets $J_1,J_2$,
\begin{align}
\abs{J_1 \cap J_2} = \abs{J_1}+\abs{J_2}-\abs{J_1+J_2},
\end{align}
and an inductive argument. The second line follows from the inequality
\begin{align}\label{eq:junionbound}
\biggabs{\bigcup_{\substack{A \in E\\r \in [s_A]}} J^A_r} \leq n-d_1,
\end{align}
along with
\begin{align}\label{eq:jbound}
\abs{J_q^A} & \geq \sum_{j=2}^m (d_j-d_{q,j}^A)\\
& \geq n-\abs{S_q^A}+d_{q,1}^A-d_1+1,\label{eq:jbound2}
\end{align}
and algebraic simplification. The inequality~\eqref{eq:junionbound} follows from the fact that for all $A \in E$, $q \in [s_A]$,
\begin{align}\label{jcontainment}
J_q^A \subseteq \bigsqcup_{j=2}^m \{j_1,\dots, j_{d_j-1}\},
\end{align}
and $n \leq \sum_{j=1}^m (d_j-1)+1$. The containment~\eqref{jcontainment} follows from the fact that $d_{q,j}^A \geq 1$ for all $j \in [m]$. The inequality~\eqref{eq:jbound} follows from the definition of $J_q^A$. The inequality~\eqref{eq:jbound2} follows from~\eqref{eq:sumdifd}. The third line~\eqref{thirdline} follows from
\begin{align}
\sum_{q \in [s_A]} d_{q,j}^A &=\sum_{q \in [s_A]} \dim\spn\{\Pi_A x_{a,1}: a \in S_q^A \}\\
&\geq \dim\spn\{\Pi_A x_{a,1}: a \in [n]\setminus A \}\\
&= \dim\spn\{\Pi_A x_{a,1}: a \in [n] \}\\
&= \dim\spn\{x_{a,1}: a \in [n] \setminus A \}-1\\
&\geq d_1-1.
\end{align}
Here, the first line is by definition. The second line follows from the standard result that for subspaces $W_1,\dots,W_n \subseteq U$ of a vector space $U$, it holds that
\begin{align}
\sum_{a \in [n]} \dim{W_a} \geq \dim\bigg[\sum_{a\in [n]} W_a\bigg].
\end{align}
The third line follows from $\Pi_A x_{a,1}=0$ for all $a \in A$. The fourth line follows from $\dim\ker(\Pi_A)=1$, and the fifth line is by definition.

The inequality~\eqref{thirdline} implies
\begin{align}
\sum_{A \in E} \biggabs{\bigcap_{q \in [s_A]} J^A_q} \geq n.
\end{align}
The claim follows from~\eqref{eq:junionbound} and the pigeonhole principle.
\end{proof}
Fix an index $j \in \{2,\dots,m\}$ and equivalence classes $A \neq B \in E$ as in Claim~\ref{claim:parallel} for the remainder of the proof, so that $d_j=2$ and $d^A_{q,j}=d^B_{r,j}=1$ for all $q \in [s_A]$, $r \in [s_B]$. Define an equivalence relation $\sim$ on $[n]$ by $a \sim b$ if $x_{a,j}\in \spn\{x_{b,j}\}$. To complete the proof, we use Proposition~\ref{lemma:parti} to conclude that $\sim$ is trivial, and hence
\begin{align}
\dim\spn\{x_{a,j}:a \in [n]\}=1,
\end{align}
which contradicts $d_j=2$ and completes the proof. Note that the partitions
\begin{align}
\bigsqcup_{a \in A} \{a\} \sqcup \bigsqcup_{q \in [s_A]} S^A_q=\bigsqcup_{b \in B} \{b\} \sqcup \bigsqcup_{r \in [s_B]} S^B_r=[n].
\end{align}
satisfy the conditions 1 and 2 of Lemma~\ref{lemma:parti}. For condition 3, suppose there exist subsets $\mathscr{A} \subseteq A$, $\mathscr{B} \subseteq B$, $Q \subseteq [s_A]$, and $R \subseteq [s_B]$ such that
\begin{align}
\mathscr{A} \sqcup \bigsqcup_{q \in Q} S^A_q=\mathscr{B}\sqcup \bigsqcup_{r \in R} S^A_r = N
\end{align}
for some subset $N \subseteq [n]$. Then
\begin{align}
\sum_{a \in N} x_a \in \ker(\Pi_A \otimes \I) \cap \ker(\Pi_B \otimes \I)=\{0\},
\end{align}
so $N \in \{\{\}, [n]\}$ by the fact that $\sum_{a \in [n]} x_a$ constitutes a minimal linear dependence of $\{x_1,\dots, x_n\}$. This completes the proof.
\end{proof}

%-----------------------------------------------------------------------------%
\section{The inequality appearing in the main conjecture is sharp}\label{conjecture_kruskal}

In this section,
% we prove that the inequality $n \leq \sum_{j=1}^m (d_j-1)+1$ appearing in Conjecture~\ref{conjecture} cannot be weakened,
we find a set of product tensors that does not split and satisfies ${n = \sum_{j=1}^m (d_j-1)+2}$. In fact, we prove that this set of product tensors forms a circuit, which is stronger than not splitting. This proves that the bound in Corollary~\ref{linincor}, and the inequality $n \leq \sum_{j=1}^m (d_j-1)+1$ appearing in Conjecture~\ref{conjecture}, are both sharp. The example we use is Derksen's~\cite{DERKSEN2013708}, which he uses to prove that the inequality appearing in Kruskal's theorem is sharp in a similar sense.
\begin{prop}
For any field $\field$ with greater than $n$ elements, and positive integers $d_1,\dots, d_m$ with $n=\sum_{j=1}^m (d_j-1)+2$, there exist vector spaces $\V_1,\dots,\V_m$ over $\field$ and a set of product tensors $\{x_1,\dots,x_n\}\subseteq \pro{\V_1 : \cdots : \V_m}$ that forms a circuit, and satisfies
\begin{align}
\dim\spn\{x_{a,j} : a \in [n]\}\geq d_j
\end{align}
for all $a \in [n]$.
\end{prop}
\begin{proof}
By Theorem 2 of \cite{DERKSEN2013708}, there exist vector spaces $\V_1,\dots, \V_m$ over $\field$, a positive integer $p \leq n$, and product tensors $\{x_a : a \in [p]\}\subseteq \pro{\V_1 : \cdots : \V_m}$ with k-ranks $d_j=\krank(x_{1,j},\dots, x_{p,j})$ such that ${\sum_{a\in [p]} x_a=0}$. If $p < n$, then $p \leq \sum_{j=1}^m (d_j-1)+1$, which implies $\{x_a:a\in[p]\}$ is linearly independent by Theorem~\ref{connect}. But this contradicts $\sum_{a \in [p]} x_a=0$, so $p=n$. The equality $n=\sum_{j=1}^m (d_j-1)+2$ implies that $d_j \leq n-1$ for all $j \in [m]$. It follows that for any subset $S \subseteq [n]$ of size $\abs{S}=n-1$, it holds that $\krank(x_{a,j} : a \in S) \geq d_j$. Since $n-1= \sum_{j=1}^m (d_j-1)+1$, then by Theorem~\ref{connect},  $\{x_a:a\in S\}$ is linearly independent. It follows that $\{x_a : a \in [n]\}$ is a circuit.
\end{proof}

%-----------------------------------------------------------------------------%
\bibliographystyle{alpha}
\bibliography{tensor_rank}
%-----------------------------------------------------------------------------%

\end{document}